\newtheorem{definition}{\bf Definition}[section]
\newtheorem{theorem}[definition]{\bf Theorem}
\newtheorem{lemma}[definition]{\bf Lemma}
\newtheorem{proposition}[definition]{\bf Proposition}
\newtheorem{corollary}[definition]{\bf Corollary}
\newtheorem{remark}[definition]{\bf Remark}
\begin{document}

\title{On the fundamental groups of solenoid complements in $\mathbb{S}^3$}

\author{Xueming Hui}
\maketitle

\begin{abstract}
We show that fundamental groups of the complements of knotted solenoids in $\mathbb{S}^3$ is solely determined by a canonical sequence of knot groups. Moreover it its determined by the embedding up to mirror reflection.
\end{abstract}

\noindent \textbf{Keywords.} Knots; Solenoids; 3-manifold theory; JSJ-decompositions; fundamental groups; knot subgroups of knot groups. 

\section{Introduction}
A knot is by definition an isotopy class of embeddings of $\mathbb{S}^1$ into $\mathbb{S}^3$. The main purpose of knot theory in the beginning is to tell knots apart. For example, the (right-handed) trefoil knot is different from the  unknot as shown below. There are many invariants for knots. For example, the coloring invariants, racks and quandle, fundamental groups of the knot complements, Alexander polynomials, Jones polynomials and HOMFLY polynomials etc.

\begin{figure}[ht]
	\centering
	\includegraphics[width=0.25\textwidth]{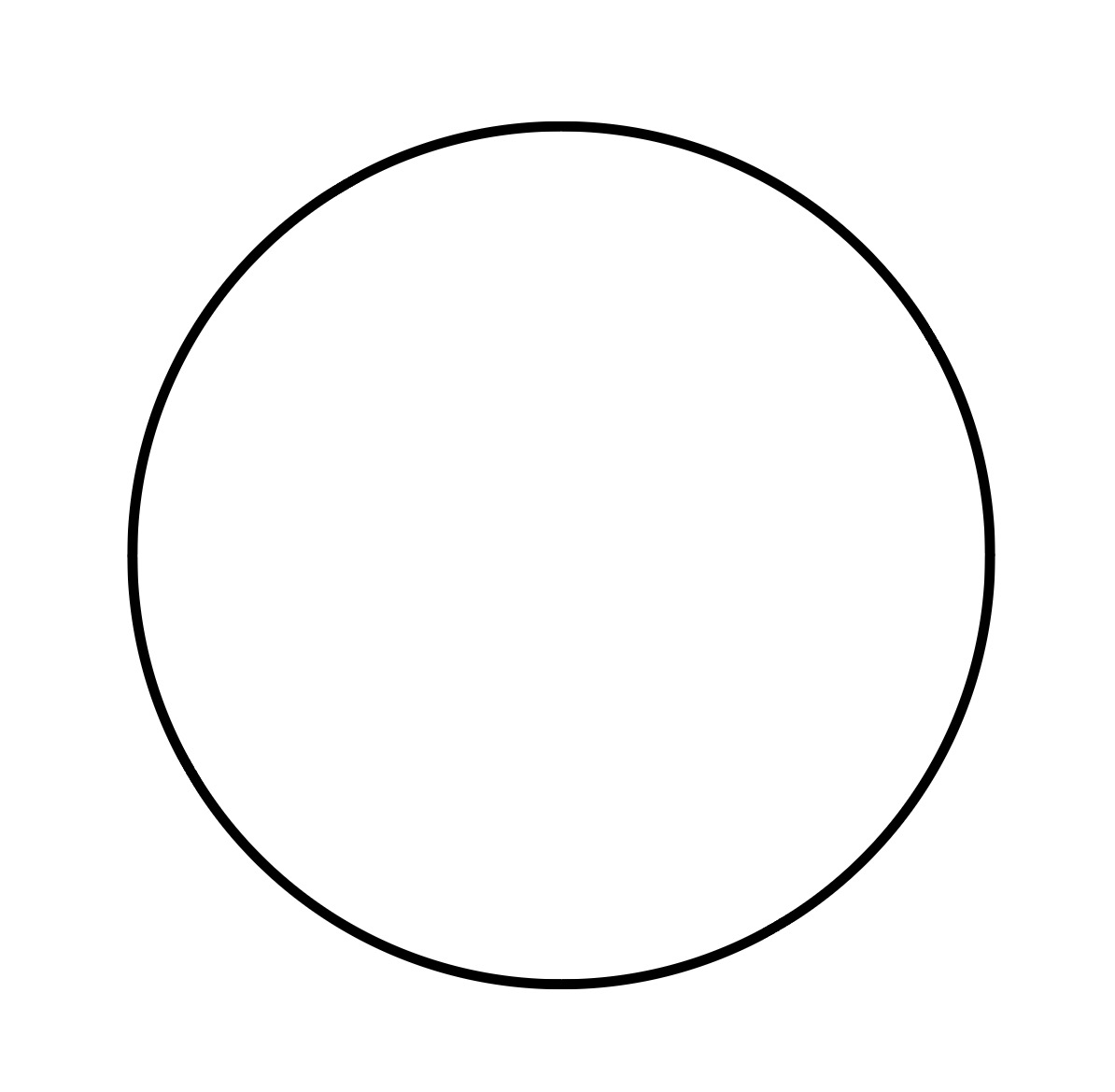}
	\includegraphics[width=0.25\textwidth]{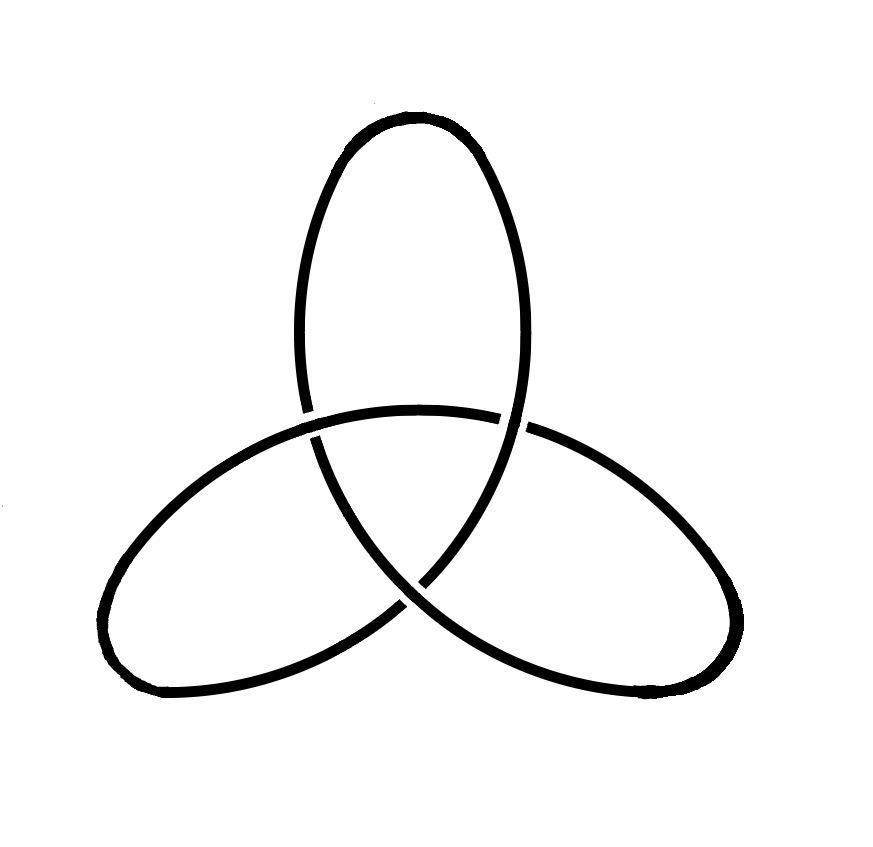}
	\caption{The Unknot and the (right-handed) trefoil knot.}
\end{figure}

In this paper we consider special sequences of knots, more specifically we consider knots sequence $\{ K_n\}_{n=0}^{\infty}$ such that $K_{n+1}$ is obtained from $K_n$ by satellite constructions. We will restrict to the case of closed braid patterns. In this case, the sequence gives an embedding of a topological object called solenoid. A \textit{Solenoid} is a topological space that is the inverse limit of an inverse system of topological groups and continuous homomorphisms 

\[ (S_i, f_i), f_i: S_{i+1}\mapsto S_i. i\geq 0\]
where each $S_i$ is a circle and $f_i$ is the map that uniformly wraps the circle $S_{i+1}$ $n_i $ times around the circle $S_i$, $n_i>1$. In other words, if we regard $S_i$'s as unit circle in the complex plane, then $f_i(z)=z^{n_i}$. Solenoid was first introduced by L. Vietoris\cite{Vietoris} in the case when $n_i=2$ and D. van Dantzig\cite{DantZig} for $n_i=n$ fixed. The general case where $n_i$ is non constant was studied by R.H. Bing, he gives a complete classification of the solenoids in \cite{Bing}. See also M.C. McCord\cite{McCord}. A 2-solenoid embedded in $\mathbb{S}^3$ is shown below. 

\begin{figure}[ht]
	\centering
	\includegraphics[width=.75\textwidth]{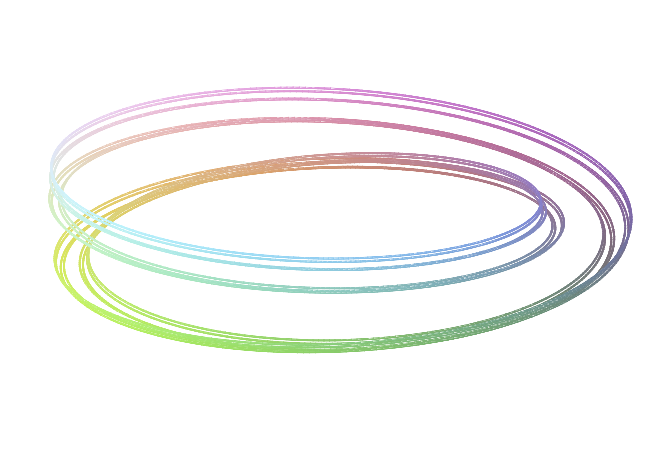}
	\caption{A Soleinoid embedded in $\mathbb{S}^3$(picture is from \cite{CMR}).}
\end{figure}
\

For an embedding of a solenoid $\Sigma$ in $\mathbb{S}^3$,  we can study the complement of the solenoid, that is $\mathbb{S}^3-\Sigma$. Like in knot theory, we would like to use the fundamental groups of the complement to study different embeddings of solenoids. Previously, B. Jiang, S. Wang, H. Zheng and Q. Zhou \cite{TAME} studied the embeddings of solenoid in $\mathbb{S}^3$. Our paper concerns the fundamental group which was considered by G. Conner, M. Meilstrup and Du{\v s}an D. Repov{\v s} \cite{CMR}. We extend their results and answer some conjectures. 

For simplicity, an isotopy class of tame solenoid embeddings is called a \textbf{soleknot}. We will discuss the precise meaning of \textit{tame} later in next section. 
\

There is a canonical sequence of knot groups and homomorphisms for a given soleknot $\Sigma$, 

\[ K_0 \xmapsto{\varphi_0} K_1 \xmapsto{\varphi_1} K_2 \xmapsto{\varphi_2} \dots \xmapsto{\varphi_{n-1}} K_n \xmapsto{\varphi_n} \dots  \] 
where all the $\varphi_n$'s are naturally induced by inclusion of knot complements. We call this sequence the \textbf{filtration} of the soleknot $\Sigma$. We have the following which is Theorem \ref{main1} in section 4. 

\begin{theorem}\label{main}
	Let $\Sigma$ and $\Sigma^{\prime}$ be two knotted soleknots, $(K_n, \varphi_n)$ and $(K_n', \psi_n)$ be the filtrations of $\pi_1(\mathbb{S}^3-\Sigma)$ and $\pi_1(\mathbb{S}^3-\Sigma^{\prime})$ respectively. Then $\pi_1(\mathbb{S}^3-\Sigma) \simeq \pi_1(\mathbb{S}^3-\Sigma^{\prime})$ if and only if $K_n \simeq K_n' $ for each $n\geq 0$. 
\end{theorem}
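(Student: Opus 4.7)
The plan is to realize $\pi_1(\mathbb{S}^3 - \Sigma)$ as a directed union of the filtration subgroups $K_n$ and then to show that this filtration is an intrinsic invariant of the ambient group. Fix nested solid tori $V_0 \supset V_1 \supset V_2 \supset \cdots$ in $\mathbb{S}^3$ with $\bigcap_n V_n = \Sigma$, so that the core of $V_n$ represents $K_n$ and $V_{n+1}$ sits inside $V_n$ as a closed braid. Then $\mathbb{S}^3 - \Sigma = \bigcup_n (\mathbb{S}^3 - V_n)$ is an ascending union of open sets, and standard direct-limit arguments give
\[ \pi_1(\mathbb{S}^3 - \Sigma) \;\cong\; \varinjlim_n K_n. \]
Each inclusion $\mathbb{S}^3 - V_n \hookrightarrow \mathbb{S}^3 - V_{n+1}$ induces an injection on $\pi_1$, because $\partial V_n$ is incompressible in $\mathbb{S}^3 - V_{n+1}$ whenever the closed braid pattern has braid index at least two. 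Thus the $K_n$'s form a genuine ascending chain of subgroups of $\pi_1(\mathbb{S}^3 - \Sigma)$.

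For the ``if'' direction, suppose isomorphisms $\alpha_n : K_n \to K_n'$ exist for each $n$. I would inductively adjust them, post-composing by automorphisms of $K_n'$, so that $\psi_n \circ \alpha_n = \alpha_{n+1} \circ \varphi_n$. The essential input is that the subgroup $\varphi_n(K_n) \leq K_{n+1}$ is the companion-knot vertex group in the JSJ decomposition of $\mathbb{S}^3 - V_{n+1}$, so by Jaco--Shalen--Johannson uniqueness it is determined up to $\mathrm{Aut}(K_{n+1})$. Hence $\alpha_{n+1}$ sends $\varphi_n(K_n)$ to a conjugate of $\psi_n(K_n')$, after which a suitable modification makes the square commute. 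The direct limit of the adjusted $\alpha_n$'s then provides the isomorphism $\pi_1(\mathbb{S}^3 - \Sigma) \cong \pi_1(\mathbb{S}^3 - \Sigma')$.

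For the ``only if'' direction, the goal is to recover each $K_n$ algebraically from $G := \pi_1(\mathbb{S}^3 - \Sigma)$. The nested incompressible tori $\partial V_n \subset \mathbb{S}^3 - \Sigma$ assemble into a canonical infinite tree-of-groups structure on $G$, whose distinguished vertex stabilizers (the ones separating ``outside'' from ``inside'') are precisely conjugates of the $K_n$'s. Any abstract isomorphism $G \cong G'$ must carry this canonical tree to its counterpart for $\Sigma'$, yielding $K_n \cong K_n'$ for every $n$.

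The main obstacle is precisely this intrinsic characterization of $K_n$ inside $G$, since $G$ is not a $3$-manifold group and is not finitely generated, so standard JSJ rigidity does not apply to $G$ directly. I expect to handle it by combining JSJ uniqueness at each finite stage with a compatibility statement asserting that the canonical choice at level $n$ is determined by the canonical choices at all lower levels, so that the whole tower is pinned down by $G$. The groundwork from \cite{CMR} on the algebraic structure of soleknot groups, together with the injectivity and rigidity of each $\varphi_n$, should supply the ingredients needed to promote stagewise rigidity to rigidity of the full filtration.
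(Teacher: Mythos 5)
Your ``only if'' direction has a genuine gap, and it is exactly at the point you flag yourself: you assert that an abstract isomorphism $F\colon G\to G'$ ``must carry this canonical tree to its counterpart,'' but an abstract isomorphism has no a priori reason to respect a topologically defined tree of groups unless the distinguished subgroups can be characterized algebraically, and you only say you ``expect to handle it.'' The paper's resolution is not stagewise JSJ rigidity promoted to the limit; it is the Gonz\'alez-Acu\~na--Whitten classification of knot subgroups of knot groups. Concretely: $F(K_i)$ is finitely generated, hence lies in a conjugate of some $K_{n_i}'$ with $n_i$ minimal; by Theorem~\ref{Redu} it is a tight subgroup of that conjugate or of one of its loose companions; the loose companions are (up to conjugacy) the $K_j'$ with $j<n_i$, which is excluded by minimality of $n_i$; and by Theorem~\ref{7.11} a nontrivial knot group properly embeds as a tight subgroup only when the ambient knot is a cable of a torus knot, which $M_{n_i}'$ is not for $n_i>1$ by Theorem~\ref{primeSat} and JSJ uniqueness. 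This forces $F(K_i)=g^{-1}K_{n_i}'g$ on the nose, after which Gordon--Luecke and a count of JSJ pieces give $i=n_i$ and $K_n\simeq K_n'$ for all $n$. Without this knot-subgroup input you cannot rule out that $F(K_i)$ is a proper knot subgroup of some $K_j'$ not conjugate to any filtration term, and your argument does not close.

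Your ``if'' direction is also shakier than it looks: arranging $\psi_n\circ\alpha_n=\alpha_{n+1}\circ\varphi_n$ exactly (not just up to conjugacy of images) is an extension problem --- not every isomorphism $K_n\to K_n'$ extends over $\varphi_n$ --- and knowing that $\alpha_{n+1}$ carries $\varphi_n(K_n)$ to a conjugate of $\psi_n(K_n')$ already requires Gordon--Luecke to make the abstract isomorphism geometric before JSJ uniqueness applies. The paper avoids this entirely by working geometrically (Theorem~\ref{homeomorphic type and filtration}): $K_n\simeq K_n'$ for all $n$ plus Gordon--Luecke and Theorem~\ref{primeSat} yield strongly equivalent maximal defining sequences, hence equivalent soleknots up to mirror image, hence homeomorphic complements and isomorphic groups. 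I would recommend adopting that route for this direction as well.
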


We will define the precise meaning of knotted soleknots, filtrations, etc later. An important fact needed for the above theorem is the following(Theorem \ref{primeSat} in section 3)

\begin{theorem}\label{primeSat1}
	A satellite knot with a closed braid pattern of winding number greater than $1$ is prime. 
\end{theorem}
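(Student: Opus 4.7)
The plan is to argue by contradiction: assume $K = K_1 \# K_2$ with both summands nontrivial, and let $S \subset \mathbb{S}^3$ be a decomposing $2$-sphere meeting $K$ transversely in two points. Write $V$ for the companion solid torus containing the pattern $P$ as a closed $w$-braid with $w \geq 2$, set $T = \partial V$, and let $E(C) = \mathbb{S}^3 \setminus V^{\circ}$. The central tool is that $T$ is incompressible in both $V \setminus K$ and $E(C)$: on the $V$-side, because an essential simple closed curve on $\partial V$ bounding a disk in $V$ must be a meridian, and any disk in $V$ with meridian boundary has algebraic intersection $\pm w$ with the closed $w$-braid $K$, hence cannot be disjoint from $K$; on the $E(C)$-side, by Dehn's lemma applied to the nontrivial companion $C$.

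First I would isotope $S$ rel $K$ to minimize $|S \cap T|$, a disjoint collection of circles on $S$. A standard innermost-disk argument, combined with incompressibility of $T$ and the parity obstruction forbidding a sphere meeting $K$ transversely in a single point, shows that every innermost disk of $S$ cut off by a circle of $S \cap T$ must contain one of the two points $S \cap K$: otherwise the disk is either a compressing disk for $T$ (if its boundary is essential on $T$) or, via an inessential disk on $T$, gives an ambient isotopy of $S$ reducing intersections. Since any nonempty configuration of disjoint circles on $S^2$ has at least two innermost disks and $|S \cap K| = 2$, there are exactly two innermost disks $D_1, D_2$, each containing one intersection point with $K$, and each bounded by an essential curve on $T$.

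Next I would rule out both possible sides for each $D_i$. If $D_i \subset E(C)$, its essential boundary on $T$ bounds a disk in $E(C)$, directly contradicting incompressibility there. If $D_i \subset V$, the essential boundary on $\partial V$ bounding a disk in $V$ must be a meridian, so $D_i$ (as a disk in $V$ with meridian boundary) has algebraic intersection $\pm w$ with $K$, forcing the geometric count $|D_i \cap K| \geq w \geq 2$; but $|D_i \cap K| = 1$. Both subcases fail, so $S \cap T = \emptyset$.

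Finally, $S$ must lie entirely in $V$ (since $S$ meets $K \subset V^{\circ}$), and $S$ bounds a $3$-ball $B \subset V$ whose intersection $\alpha = K \cap B$ is a single arc; closing $\alpha$ with a spanning arc on $S$ realizes the summand $K_2$ inside $B$. The closed-braid hypothesis now forces $\alpha$ to be boundary-parallel in $B$: once $B$ is positioned within a single fundamental domain $D^2 \times I$ of the meridional fibration of $V$, the arc $\alpha$ is a sub-arc of one monotonic strand of the braid, and any monotonic arc in $D^2 \times I$ is unknotted in a ball containing it. Consequently $K_2$ is the unknot, contradicting our assumption. The main obstacle is this last reduction: an embedded $3$-ball in $V$ can a priori project onto all of $S^1$ under the meridional fibration, so positioning $B$ within a single fundamental domain requires a further isotopy of $S$ inside $V \setminus K$ together with an innermost-disk argument between $S$ and meridian disks of $V$, exploiting the condition $w \geq 2$ to force the $w$ strands of $P$ outside $B$.
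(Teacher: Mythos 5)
There is a genuine gap at the decisive final step. Your reduction to a sphere $S$ with $S \cap \partial V = \emptyset$, bounding a ball $B \subset V$ with $K \cap B$ a single arc $\alpha$, is fine and matches the paper's first step (the paper outsources it to Cromwell's Theorem 4.4.1; your innermost-disk and winding-number argument is a reasonable substitute). But the claim that closes your proof --- that $\alpha$ is boundary-parallel in $B$ because ``any monotonic arc in $D^2 \times I$ is unknotted in a ball containing it'' --- is false as stated. Whether a ball--arc pair $(B,\alpha)$ is trivial depends on how $B$ sits around $\alpha$, not only on the arc: given any knotted ball--arc pair, an ambient isotopy of $\mathbb{R}^3$ straightens the arc to a vertical segment while dragging $B$ to a ball in which that straight segment is still knotted. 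So even after positioning $B$ inside a fundamental domain $D^2 \times I$ (itself a step you acknowledge you have not carried out), monotonicity of $\alpha$ buys you nothing. You have correctly identified this as ``the main obstacle,'' but identifying an obstacle is not the same as overcoming it, and the sketched fix (a further isotopy plus an innermost-disk argument against meridian disks, ``exploiting $w \geq 2$'') is not supplied and does not obviously work, since the issue is the embedding of $B$, not of $\alpha$.

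This missing step is exactly the point where the paper deploys its real technical content. The paper observes that a nontrivial ball--arc pair $(B,\alpha)$ would produce a swallow-follow torus $\partial(B - U)$ in the braid-closure complement $W - U_{\hat\beta}$, with the meridian of $U_{\hat\beta}$ homotopic into it, and then rules such a torus out algebraically: Lemma \ref{HNN} computes the centralizer of the meridian generator $x_1$ in the HNN presentation of $\pi_1(W-U_{\hat\beta})$, Proposition \ref{NoEssentialTorus} shows there is a unique maximal $\Z\times\Z$ subgroup containing $x_1$, and Lemma \ref{IsoSurfaces} converts this uniqueness into the nonexistence of a second essential torus carrying the meridian (Theorem \ref{NoGeoTorus}). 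To complete your proof you would need either to reproduce an argument of this kind or to give a genuinely geometric proof that the pair $(B,\alpha)$ is trivial --- which the paper explicitly says it was unable to find.
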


For prime knot, one has the following theorem of C. Gordon and J. Luecke,

\begin{theorem}\cite{Gordon&Luecke}
    If two prime knots have isomorphic groups then they are equivalent up to mirror image.
\end{theorem}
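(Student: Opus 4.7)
The plan is to factor the deduction into two classical steps: first upgrade the algebraic isomorphism of groups to a topological homeomorphism of knot exteriors, then upgrade that homeomorphism to an equivalence of knots up to mirror image. Let $K, K'$ be prime knots in $\mathbb{S}^3$ with exteriors $X_K = \mathbb{S}^3 - N(K)$ and $X_{K'} = \mathbb{S}^3 - N(K')$. Since $K$ and $K'$ are prime, both exteriors are compact, orientable, irreducible $3$-manifolds with a single incompressible torus boundary, so both are Haken (a minimal-genus Seifert surface provides an incompressible surface).

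For the first step I would argue that any isomorphism $\varphi : \pi_1(X_K) \xrightarrow{\sim} \pi_1(X_{K'})$ can be adjusted, by conjugation in the target, so as to send the peripheral $\mathbb{Z} \oplus \mathbb{Z}$ subgroup of $X_K$ onto that of $X_{K'}$. For prime knots that are not torus knots, the peripheral subgroup is algebraically singled out as a distinguished $\mathbb{Z} \oplus \mathbb{Z}$ subgroup visible from the JSJ decomposition of the exterior, via the Bass--Serre tree of the corresponding graph of groups; torus knot groups $\langle a, b \mid a^p = b^q \rangle$ must be handled separately using their explicit presentations and centers. Once peripheral structure is matched, Waldhausen's theorem for sufficiently large $3$-manifolds promotes $\varphi$ to a homeomorphism $X_K \cong X_{K'}$.

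For the second step I would invoke the Gordon--Luecke theorem that knots in $\mathbb{S}^3$ are determined by their complements: a homeomorphism $X_K \cong X_{K'}$ forces $K$ and $K'$ to be isotopic, possibly after reversing the orientation of $\mathbb{S}^3$. Reversal of orientation of $\mathbb{S}^3$ sends a knot to its mirror image, which accounts for the mirror ambiguity in the conclusion.

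The main obstacle is the first step, and within it the algebraic detection of the peripheral subgroup. One must rule out that $\varphi$ sends the peripheral torus subgroup of $X_K$ to some non-peripheral $\mathbb{Z} \oplus \mathbb{Z}$ subgroup of $\pi_1(X_{K'})$. The delicate cases are those where the JSJ decomposition of $X_K$ contains Seifert-fibered pieces whose regular fibers are boundary-parallel, as in cable or torus knots; these require case-by-case verification rather than a uniform algebraic argument. The Gordon--Luecke theorem itself, although very deep, would be used as a black box.
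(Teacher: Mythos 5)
The paper offers no proof of this statement: it is imported as a citation, so there is no internal argument to compare yours against. Your two-step factorization is nonetheless the correct skeleton of how the result is actually established in the literature: step one (a group isomorphism between prime knot groups forces a homeomorphism of exteriors) is Whitten's theorem, and step two is the Gordon--Luecke complement theorem; strictly speaking the statement as quoted is the conjunction of the two rather than Gordon--Luecke alone. One small misplacement: you invoke primeness to get irreducibility and Hakenness of the exteriors, but every nontrivial knot exterior is irreducible and Haken by Alexander's theorem plus the existence of a Seifert surface. Primeness is not needed there; it is needed precisely in your first step, and indeed the theorem is false without it (the square and granny knots have isomorphic groups but inequivalent, non-homeomorphic complements).

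The genuine gap is in that first step. For a prime knot that is neither a torus knot nor a cable knot one can locate the peripheral $\mathbb{Z}\oplus\mathbb{Z}$ from the JSJ splitting and then apply Waldhausen, as you say. But in the torus-knot and cable cases, which you correctly flag as delicate, the peripheral subgroup is not singled out among the $\mathbb{Z}\oplus\mathbb{Z}$ subgroups of the boundary-adjacent Seifert piece by any soft Bass--Serre argument, and ``case-by-case verification'' does not close the issue: a priori the isomorphism could carry the meridian of $K$ to a non-meridional slope on $\partial X_{K'}$, yielding a homotopy equivalence of exteriors that is not homotopic to a homeomorphism. Whitten's actual resolution is to show that such a mismatch would produce a nontrivial Dehn surgery on a knot yielding a manifold with cyclic fundamental group, and to rule this out with the Culler--Gordon--Luecke--Shalen cyclic surgery theorem. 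That input is of comparable depth to Gordon--Luecke itself and is absent from your sketch; as written, your argument proves the theorem only for prime knots whose exteriors have no Seifert-fibered piece meeting the boundary.
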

  Together with some results on the knot subgroups of a knot group by F. Gonz\'alez-Acu\~na and W. Whitten\cite{GAW}, we are able to prove Theorem \ref{main}.

C. Gordon and J. Luecke's theorem\cite{Gordon&Luecke} shows that up to mirror image, fundamental groups distinguish the prime knots. A natural question is: do the fundamental groups of the complements of soleknots tell them apart? The following theorem(Theorem \ref{complement} in section 4) answers this question in the positive. This proves conjecture 4.4 in \cite{CMR}. 
\
\begin{theorem}\label{complement1}
	Let $\Sigma$ and $\Sigma'$ be two knotted soleknots in $\mathbb{S}^3$, $\pi_1(\mathbb{S}^3-\Sigma) \simeq \pi_1(\mathbb{S}^3-\Sigma')$ if and only if they are equivalent or their mirror images are equivalent.
\end{theorem}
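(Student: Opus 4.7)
The ``if'' direction will be immediate: any equivalence of soleknots (or of $\Sigma$ with the mirror of $\Sigma'$) produces a homeomorphism of complements and hence an isomorphism of fundamental groups. For the converse, the plan is to chain together Theorem \ref{main}, Theorem \ref{primeSat1}, and the Gordon--Luecke theorem, and then to use JSJ uniqueness to promote knotwise equivalences to a global equivalence of soleknots.

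Assuming $\pi_1(\mathbb{S}^3-\Sigma)\simeq \pi_1(\mathbb{S}^3-\Sigma')$, I would first invoke Theorem \ref{main} to obtain $K_n\simeq K_n'$ as knot groups for every $n\geq 0$. For $n\geq 1$, Theorem \ref{primeSat1} guarantees that $K_n$ is prime, so Gordon--Luecke yields an equivalence of $K_n$ and $K_n'$ up to mirror image; the case $n=0$ is handled the same way when $K_0$ is prime, and is trivial when $K_0$ is the unknot, since the group $\mathbb{Z}$ determines the unknot.

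Next I would address the coherence of the mirror choices. Because reflecting $\mathbb{S}^3$ sends the filtration of $\Sigma$ to that of $\Sigma^*$, and because mirror reflection commutes with the closed-braid satellite construction, a short induction should show that either $K_n\simeq K_n'$ for every $n$, or $K_n\simeq (K_n')^*$ for every $n$; otherwise one would force some non-amphichiral $K_m$ to be equivalent to its own mirror. After possibly replacing $\Sigma'$ by $\Sigma'^*$, we may reduce to the first case.

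It then remains to upgrade these knotwise equivalences to an equivalence of the soleknots themselves. Writing $\Sigma=\bigcap_n V_n$ and $\Sigma'=\bigcap_n V_n'$ for the descending chains of companion solid tori, the main tool is JSJ uniqueness. For each $n\geq 1$, $\partial V_n$ appears as a canonical essential torus in the JSJ decomposition of $\mathbb{S}^3-K_{n+1}$, and similarly for $\partial V_n'$ in $\mathbb{S}^3-K_{n+1}'$. By Jaco--Shalen--Johannson uniqueness, any equivalence $(\mathbb{S}^3,K_{n+1})\to(\mathbb{S}^3,K_{n+1}')$ can be isotoped to carry $\partial V_n$ to $\partial V_n'$. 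The hard part will be fitting these level-wise equivalences together coherently: at each inductive step one must adjust within the mapping class group of the appropriate JSJ piece so that the homeomorphism already built on the outer shell $V_{n-1}\setminus \mathrm{int}\, V_n$ extends across $V_n\setminus \mathrm{int}\, V_{n+1}$. Once such a coherent homeomorphism $h:\mathbb{S}^3\to\mathbb{S}^3$ with $h(V_n)=V_n'$ for every $n$ is built, continuity will force $h(\Sigma)=\Sigma'$ and complete the proof.
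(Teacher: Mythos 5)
Your proposal is correct and follows essentially the same route as the paper: the paper proves this theorem by combining Theorem \ref{main1} (the group isomorphism forces $K_n\simeq K_n'$ for all $n$) with Theorem \ref{homeomorphic type and filtration}, whose proof is exactly your Gordon--Luecke-plus-JSJ-uniqueness argument for upgrading the levelwise knot-group isomorphisms to a strong equivalence of maximal defining sequences. Your treatment of the coherence of mirror choices and of gluing the levelwise homeomorphisms is, if anything, slightly more explicit than the paper's.
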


\section{Preliminaries}

In this section, we introduce the basics of 3-manifolds and definitions on solenoid embeddings that will be used.

Let $S$ be a connected compact surface properly embedded in a compact oriented 3-manifold $M$. A \textbf{compressing disk} $D$ is a disk embedded in $M$ such that $D \cap S=\partial D$ and the intersection is transverse. If the curve $\partial D$ does not bound a disk inside of $S$, then $D$ is called a \textbf{nontrivial} compressing disk. If $S$ has a nontrivial compressing disk, then we call $S$ a \textbf{compressible surface} in $M$. If $S$ is neither the 2-sphere nor a compressible surface, then we call the surface \textbf{incompressible}. Assume $\chi(S) \leq 0$,  we say that $S$ is \textbf{essential} if it is incompressible, $\partial$-incompressible, and not $\partial$-parallel.

Now we are ready to define the central concept used in this paper called satellite knots. A knot $K \subset \mathbb{S}^3$ is a \textbf{satellite} if its complement contains an essential torus. 
	An equivalent and more intuitive definition is the following,
	
	Let $K_2$ be a nontrivial oriented knot in $\mathbb{S}^3$ and $V$ a closed regular neighborhood of $K_2$. Let $\tilde{V}$ be an oriented unknotted closed solid torus in $\mathbb{S}^3$ and $K_1$ an oriented knot in the interior of $\tilde{V}$. A meridional disk of $\tilde{V}$ will meet $K_1$ in a finite subset. The least number of times a meridional disk of $\tilde{V}$ must meet $K_1$ is called the \textbf{wrapping number} of the pattern. Suppose that the wrapping number of the pattern is greater than zero and let $h : (\tilde{V} ,K_1) \mapsto (V,K)$ be an oriented homeomorphism of pairs. The image of $K_1$ under $h$, denoted by $K$, is a knot in $V \subset \mathbb{S}^3$ called a \textbf{satellite knot}.
	The knot $K_2$ is called a \textbf{companion knot} of $K$ and the torus $ \partial V$ is called a \textbf{companion torus}. The pair $(K_1,\tilde{V})$ is called a \textbf{pattern} of $K$. 

When the pattern of a satellite knot is a torus knot, we call the satellite knot a \textbf{cable knot}. A \textbf{torus knot} is a knot that lies on the surface of an unknotted torus in $\mathbb{S}^3$. It's clear that satellite knot construction is highly non unique since there are infinitely many ways to identify the boundary torus of the pattern and the companion torus. We will restrict to the case of untwisted satellite knot. That is the one that sends standard longitude of the boundary torus of the pattern to the standard longitude of the companion torus. Standard is determined by their embeddings in $\mathbb{S}^3$. 

The JSJ-decomposition is used in the proof of the main theorem.
\begin{theorem}[JSJ-decomposition]
	Irreducible orientable and boundary irreducible 3-manifolds have a unique (up to isotopy) minimal collection of disjointly embedded incompressible tori such that each component of the 3-manifold obtained by cutting along the tori is either (homotopically) atoroidal or Seifert-fibered.
\end{theorem}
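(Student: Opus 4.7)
The plan is to follow the classical Jaco--Shalen--Johannson strategy, splitting the argument into existence of a maximal family, a dichotomy on the resulting pieces, and then uniqueness. First I would invoke Haken's finiteness theorem: in a compact, irreducible, boundary-irreducible, orientable $3$-manifold $M$, any system of pairwise disjoint, pairwise non-parallel essential surfaces has uniformly bounded cardinality. Applying this to the class of incompressible tori and greedily adding essential tori one at a time, I obtain a maximal finite disjoint collection $\mathcal{T}$ of pairwise non-parallel incompressible tori in $M$.

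Next I would prove the key dichotomy: every component $X$ of $M$ cut along $\mathcal{T}$ is either atoroidal or Seifert-fibered. If $X$ contains no essential torus at all, it is atoroidal by definition. Otherwise, maximality of $\mathcal{T}$ forces every essential torus in $X$ to be parallel to one of its boundary tori. At this point I would invoke the characteristic submanifold theorem of Jaco--Shalen and Johannson, whose conclusion is precisely that such a piece carries a Seifert fibration. After this, removing any torus of $\mathcal{T}$ that bounds a Seifert-fibered piece on both sides whose fibrations agree on the common boundary yields a minimal collection.

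For uniqueness, given two minimal families $\mathcal{T}$ and $\mathcal{T}'$, I would put them in general position and minimize the number of components of $\mathcal{T} \cap \mathcal{T}'$. If the intersection were nonempty, innermost-circle and innermost-annulus arguments on the component tori --- valid because each torus is incompressible and $M$ is irreducible --- produce an ambient isotopy reducing the number of intersection curves, contradicting minimality. Once $\mathcal{T}$ and $\mathcal{T}'$ are disjoint, a piece-by-piece matching argument, combined with the essential uniqueness of Seifert fibrations on irreducible $3$-manifolds (with a small list of exceptional base orbifolds to verify separately), shows that the two families are isotopic.

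The main obstacle is the step identifying a piece in which every essential torus is $\partial$-parallel as Seifert-fibered. This sits at the technical heart of the Jaco--Shalen--Johannson theory: it proceeds through a careful study of essential annuli, a window-frame type argument for the characteristic pair $(\Sigma, \partial\Sigma \cap \partial M)$, and a refinement of Waldhausen's theorem on Seifert fiberings. In a self-contained exposition I would cite this result directly rather than reprove it, but it is the crucial non-trivial ingredient and the one place where the argument genuinely leaves combinatorial/innermost-disk territory.
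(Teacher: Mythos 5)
This statement is the classical Jaco--Shalen--Johannson torus decomposition theorem; the paper quotes it as background and offers no proof of its own, so there is nothing internal to compare your argument against --- the honest ``proof'' here is a citation to Jaco--Shalen and Johannson. Your outline does follow the standard strategy (Haken finiteness to get a maximal disjoint family of non-parallel incompressible tori, a dichotomy on the complementary pieces, then an innermost-curve argument plus uniqueness of Seifert fibrations for uniqueness), and the overall architecture is the right one.

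There is, however, a genuine error in your dichotomy step. You claim that if a piece $X$ does contain an essential torus, then maximality forces every essential torus of $X$ to be boundary-parallel, and that the characteristic submanifold theorem then says $X$ is Seifert-fibered. First, ``essential'' already excludes boundary-parallel, so the two branches of your case split collapse: after cutting along a maximal family every piece is simply \emph{geometrically} atoroidal. Second, and more seriously, ``every incompressible torus is boundary-parallel'' does not imply ``Seifert-fibered'' --- hyperbolic pieces satisfy exactly this property and are not Seifert-fibered. The actual content hiding here is the gap between geometric and homotopic atoroidality: a geometrically atoroidal piece can still contain an essential \emph{singular} torus (a non-peripheral $\mathbb{Z}\times\mathbb{Z}$ in its fundamental group), and the theorem one must invoke is that such exceptional pieces belong to a short explicit list of small Seifert-fibered manifolds. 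Only after identifying those, and re-amalgamating adjacent Seifert pieces whose fibrations match along a common torus, do you get the minimal canonical family. Relatedly, your uniqueness paragraph disposes of the hard part in one sentence: making the two torus families disjoint by innermost arguments is routine, but showing that a JSJ torus of $\mathcal{T}'$ sitting inside a piece of $\mathcal{T}$ must be isotopic to a torus of $\mathcal{T}$ (boundary-parallel in an atoroidal piece, or vertical in a Seifert piece and then removable by minimality) is where the characteristic submanifold machinery is genuinely needed, and it should at least be stated as the lemma being invoked rather than folded into ``a piece-by-piece matching argument.''
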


A 3-manifold $M$ is \textbf{irreducible} if every sphere $S$ contained in the interior of $M$ bounds a ball.

All the remaining definitions in this section are from \cite{TAME}. Some of them are stated differently in \cite{TAME}. But one can show that they are equivalent. These definitions lead to an important concept called the maximal defining sequence of a soleknot. 

\begin{definition}
    Let $N$ be a solid torus and $\beta$ be a nontrivial closed braid embedded in $N$. A closed regular neighborhood of $\beta$ in $N$ is called a \textbf{thick braid} in $N$.
\end{definition}
This is essentially a braid version of satellite knot construction. Next we give an equivalent definition of solenoid. This definition is more constructive compare to the one we give earlier in the paper. In particular, this definition also defines an embedding of solenoid in either the solid torus or $\mathbb{S}^3$. 

\begin{definition}
    Let $\{N_n\}_{n=0}^{\infty}$ be a nested sequence of solid torus such that $N_n$ is embedded in $N_{n-1}$ as a thick braid for every $n\geq 1$. If the diameter of the meridian disk of $N_n$ tends to zero uniformly as $n$ goes to infinity then we call $\Sigma = \bigcap_{n=0}^{\infty} N_n$ a \textbf{solenoid}. 
    
    The embedding $\Sigma \subset N_0$ is called a \textbf{standard embedding} of $\Sigma$ in $N_0$.  
\end{definition}

We will call $\{N_n\}_{n=1}^{\infty}$ a \textbf{defining sequence} of the standard embedding $\Sigma \subset \mathbb{S}^3$. Just like there are tame knots and wild knots. There are tame embeddings and wild embeddings for solenoids as well. 

\begin{definition}
    Let $\Sigma$ be a solenoid embedded in a solid torus $N_0$. The embedding $\Sigma \subset N_0$ is called a \textbf{tame} embedding if there is a homeomorphism $f : (N_0,\Sigma) \mapsto (N_0,\Sigma^{\prime})$ for some standard embedding $\Sigma^{\prime}\subset N_0$. 
\end{definition}
    Call $\{f^{-1}(N_n^{\prime})\}_{n=1}^{\infty}$ a \textbf{defining sequence} of the embedding $\Sigma \subset N_0$, where $\{ N_n^{\prime}\}_{n=1}^{\infty}$ is a defining sequence of $\Sigma^{\prime}$.
    
\begin{definition}
    An embedding $\Sigma \subset \mathbb{S}^3$ of a solenoid is called \textbf{tame} if it can be factored as $\Sigma \subset N_0\subset \mathbb{S}^3$ in which $\Sigma \subset N_0$ is tame. 
\end{definition}
    The sequence $\{N_n\}_{n=0}^{\infty}$ is called a \textbf{defining sequence} of the embedding $\Sigma \subset \mathbb{S}^3$. 
\begin{definition}
    A tame embedding of a solenoid $\Sigma \subset \mathbb{S}^3$ with defining sequence $\{N_n\}_{n=0}^{\infty}$ is called \textbf{knotted} if some defining solid torus $N_n \subset \mathbb{S}^3$ is knotted; otherwise we call the embedding \textbf{unknotted}.
\end{definition}

For two tame solenoid embeddings in $\mathbb{S}^3$, we can talk about when they are equivalent. 

\begin{definition}
    Call two tame solenoids $\Sigma, \Sigma^{\prime}\subset \mathbb{S}^3$ \textbf{equivalent} if there is an orientation preserving homeomorphism $f : \mathbb{S}^3 \mapsto \mathbb{S}^3$ such that $f(\Sigma) = \Sigma^{\prime}$.

    An equivalence class of tame solenoid embeddings is called a \textbf{soleknot}.
\end{definition}
 
When there is no ambiguity, the image of a tame solenoid embedding will also be called a soleknot. 

\begin{definition}
    We say two defining sequences $\{N_n\}_{n=0}^{\infty}$ and $\{N_n^{\prime}\}_{n=0}^{\infty}$ of tame solenoid embeddings in $\mathbb{S}^3$ are \textbf{strongly equivalent} if there is an orientation preserving homeomorphism $f_0 : (\mathbb{S}^3, N_0) \mapsto(\mathbb{S}^3, N_0^{\prime})$ and orientation preserving homeomorphisms $f_n : (N_{n-1}, N_n) \mapsto (N^{\prime}_{n -1} , N_n^{\prime} )$ with $f_n |\partial N_{n} = f_{n-1}| \partial N_{n-1}$ for $n\geq 1$.
\end{definition}

 The following is shown in \cite{TAME}. It says that $\Sigma$ and $\Sigma'$ having strongly equivalent defining sequences implies that $\Sigma$ is equivalent to $\Sigma'$. 

\begin{proposition}\cite{TAME} \label{max}
	Up to strong equivalence, each knotted tame solenoid $\Sigma \subset \mathbb{S}^3$ has a unique maximal defining sequence $\{N_n\}, n\geq 0$ such that $N_0$ is knotted and any other defining sequence $\{N_n^{\prime} \}, n\geq 0$ with $N_0^{\prime}$ knotted is a subsequence of $\{N_n\}, n\geq 0$. 
	
	\textup{By a \textbf{maximal} defining sequence, we mean $N_n\setminus N_{n+1}$ contains no essential torus for each $n \geq 0$.}
\end{proposition}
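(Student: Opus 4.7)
The plan is to construct a maximal defining sequence by iteratively refining an arbitrary defining sequence, using the JSJ-decomposition to locate essential tori in each layer, and then to deduce both the universal subsequence property and uniqueness from the uniqueness of JSJ tori in a Haken manifold.

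\emph{Existence and refinement.} I would fix any defining sequence $\{N_n^{\prime}\}_{n \geq 0}$ of $\Sigma$ with $N_0^{\prime}$ knotted, and for each $n \geq 0$ consider the compact $3$-manifold with torus boundary $W_n = N_n^{\prime} \setminus \mathrm{int}(N_{n+1}^{\prime})$. Since $N_{n+1}^{\prime}$ sits in $N_n^{\prime}$ as a thick braid of winding number greater than $1$, $W_n$ is irreducible and boundary-irreducible, so the JSJ-decomposition supplies a canonical finite collection of disjoint essential tori $T_1, \ldots, T_{k_n}$. Each $T_j$ is a torus in $\mathbb{S}^3$, hence bounds a solid torus on at least one side; essentiality in $W_n$ forces the side containing $N_{n+1}^{\prime}$ to be the solid torus side, yielding a nested chain $N_n^{\prime} \supsetneq V_1 \supsetneq \cdots \supsetneq V_{k_n} \supsetneq N_{n+1}^{\prime}$ of solid tori all catching the same piece of $\Sigma$. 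Inserting these between $N_n^{\prime}$ and $N_{n+1}^{\prime}$ for every $n$ produces a new nested family $\{N_n\}$ in which $N_n \setminus \mathrm{int}(N_{n+1})$ contains no essential torus; this is the candidate maximal sequence.

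\emph{Subsequence property and uniqueness.} Let $\{M_n\}_{n \geq 0}$ be another defining sequence with $M_0$ knotted. Each torus $\partial M_n$ is incompressible in $\mathbb{S}^3 \setminus \Sigma$, so after a standard innermost-disk/annulus reduction of intersections with the $\partial N_m$ one may assume $\partial M_n$ lies inside a single layer $N_m \setminus \mathrm{int}(N_{m+1})$. By maximality this layer has no essential torus, hence $\partial M_n$ is $\partial$-parallel and must be isotopic to $\partial N_m$ or $\partial N_{m+1}$ rel the side containing $\Sigma$. This produces an order-preserving injection $\sigma : \mathbb{N} \to \mathbb{N}$ with $\partial M_n$ isotopic to $\partial N_{\sigma(n)}$; an isotopy-extension argument compatible with the boundary-matching condition in the definition of strong equivalence then upgrades this to the level-wise orientation-preserving homeomorphisms $f_n$ required, realizing $\{M_n\}$ as a subsequence of $\{N_n\}$. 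Applying the conclusion with $\{M_n\}$ itself maximal forces $\sigma$ to be a bijection, giving uniqueness up to strong equivalence, while the requirement that $N_0$ be knotted singles out the outermost term as the outermost solid torus in $\mathbb{S}^3$ containing $\Sigma$ with essential boundary.

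\emph{Main obstacle.} The most delicate verification is that each inserted inclusion $V_{j+1} \subset V_j$ really is a thick closed braid of winding number greater than $1$; without this the refined sequence is merely a nested family of solid tori and need not define a solenoid at all. To establish it, one exploits incompressibility and $\partial$-incompressibility of the relevant JSJ torus in $W_n$ to rule out $\partial$-parallel or meridian-compressing patterns, then invokes the classification of essential tori in solid-torus exteriors (Seifert-fibered pieces plus cable/torus knot patterns) to identify the companion structure with a closed braid of positive winding number. A secondary technical point, the uniform shrinking of meridional diameters through the refinement, is automatic because only finitely many tori are inserted in each layer.
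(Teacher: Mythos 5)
The paper does not actually prove this proposition: it is imported verbatim from \cite{TAME}, so there is no in-house argument to compare yours against. On its own terms your strategy --- refine an arbitrary defining sequence by inserting the essential tori of each layer, then use atoroidality of the refined layers to force any other defining torus to be boundary-parallel --- is the natural one and is surely in the spirit of the source. The subsequence/uniqueness half is essentially fine once maximality of the refined sequence is secured (modulo checking that each $\partial M_n$ is incompressible in the layer it lands in, so that ``no essential torus'' yields boundary-parallel rather than compressible). The problems are concentrated in the existence step, where the two claims you state in one line each are exactly where the content lies.

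First, you assert that essentiality of a JSJ torus of $W_n = N_n'\setminus \mathrm{int}(N_{n+1}')$ forces it to bound a solid torus containing $N_{n+1}'$, and that the resulting solid tori form a nested chain of thick closed braids of winding number greater than $1$. Neither is automatic: an essential torus in a closed-braid complement could a priori be a swallow-follow torus or otherwise fail to be a companion torus of the braid structure, and excluding this is precisely what the paper's Section 3 (Lemma \ref{IsoSurfaces} and Theorem \ref{NoGeoTorus}, via the centralizer computation of Lemma \ref{HNN}) is built to do; you never connect to any such argument. You do flag the ``thick braid of winding number $>1$'' verification as the main obstacle, which is the right instinct, but it is not a secondary check --- without it the refined family is not a defining sequence and the construction is vacuous. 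Second, and independently: the complementary pieces of a JSJ decomposition are atoroidal \emph{or Seifert fibered}. A Seifert fibered piece such as a composing space (a disk with $k\geq 3$ holes times $S^1$, which is exactly what composite patterns produce) contains essential tori, indeed infinitely many isotopy classes of them, so inserting the JSJ tori does not by itself yield layers with no essential torus, and no canonical maximal refinement could exist if such pieces occurred. One must argue they cannot occur here --- this is where primeness of closed-braid satellites (Theorem \ref{primeSat}) has to enter --- and your proposal does not address it.
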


\section{A satellite knot with braid pattern is prime}
\

In this section, we prove a fact that will be used later. That is a satellite knot with braid pattern is prime.

Consider a braid $\beta$ with $n$ strands. Let $\hat{\beta}$ be it's closure in a solid torus $W$. Let $D$ be a meridian disc of $W$, then $W-\hat{\beta}$ is the mapping torus $M_f$ of $D-\cup_{i=1}^n\{p_i\}$. For every $i$, $p_i$ is a point in the interior of $D$ and $f$ is the mapping class of $D-\cup_{i=1}^n\{p_i\}$ determined by $\beta$. Let $U_{\hat{\beta}}$ be a tubular neighborhood of $\hat{\beta}$ in $W$, then $U_{\hat{\beta}}$ intersects $D$ at $n$ open balls $B_1,B_2,\dots, B_n$. Let $D_n:=D-\cup_{i=1}^n B_i$. Choose a base point for $D_n$ as in Figure \ref{basepoint}. 

\begin{figure}[ht]
	\centering
	\includegraphics[scale=1.2]{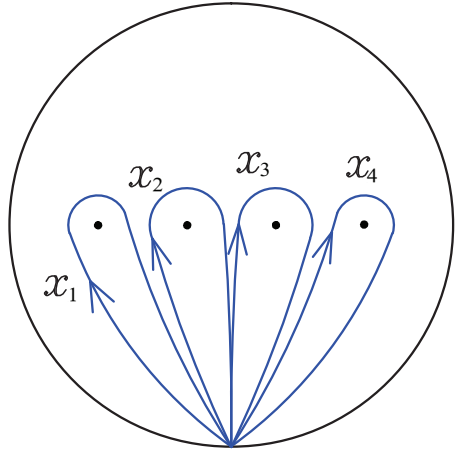}
	\caption{$D_4$ and generators of $\pi_1(D_4)$.}\label{basepoint}
\end{figure}

\begin{figure}[ht]\label{sigma1}
	\centering
	\includegraphics[scale=1.2]{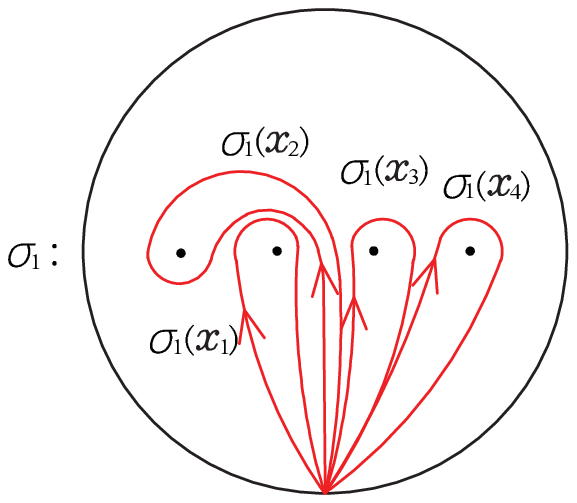}
	\caption{$\sigma_1$ acts on $\pi_1(D_4)$. } 
\end{figure}

$\beta$ acts on $\pi_1(D_n)$ as an automorphism. See Figure \ref{sigma1}. It's easy to see that $\pi_1(D_n)$ is isomorphic to the free group of rank $n$ and $\pi_1(W-U_{\hat{\beta}})\simeq \langle \ x_1,x_2, \dots ,x_n, t \ |\  t^{-1}x_i t =\beta (x_i)\ \rangle$ which is the HNN-extension of $\pi_1(D_n)$ by $\beta$. In this case, it is actually a semi-direct product. 

\begin{lemma}\label{HNN}
	Let $\beta$ be a braid with $n$ strands such $\hat{\beta}$ is a knot. Let $\pi_1(W-U_{\hat{\beta}})=\langle \ x_1,x_2, \dots ,x_n, t \ |\  t^{-1}x_i t =\beta (x_i)\ \rangle$ be a presentation of $\pi_1(W-U_{\hat{\beta}})$ as above. 
	
	The centralizer of $x_1$ in $\pi_1(W-U_{\hat{\beta}})$ is $\{ (t^{n} w)^k x_1^l \ | \ k,l \in \mathbb{Z}\}$ where $w$ is the unique element(which doesn't end with a power of $x_1$) such that $\beta^n(x_1)=wx_1w^{-1}$. 
	
\end{lemma}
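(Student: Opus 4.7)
The plan is to exploit the semidirect product structure $\pi_1(W-U_{\hat{\beta}})\cong F_n\rtimes_{\beta}\mathbb{Z}$, where $F_n=\langle x_1,\dots,x_n\rangle$ is free and the defining relation $t^{-1}x_it=\beta(x_i)$ extends to $t^{k}yt^{-k}=\beta^{-k}(y)$ for every $y\in F_n$ and $k\in\mathbb{Z}$. Every element has a unique normal form $g=ut^k$ with $u\in F_n$, $k\in\mathbb{Z}$. First I would take an arbitrary centralizing $g=ut^k$ and expand $gx_1g^{-1}=x_1$, reducing the commutation condition to the $F_n$-equation
\[
\beta^{-k}(x_1)=u^{-1}x_1u.
\]

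Next I would use that $\hat{\beta}$ is a knot: the braid $\beta$ permutes the strands by a single $n$-cycle, so it permutes the meridional generators $x_1,\dots,x_n$ up to conjugation by a single $n$-cycle. Consequently $\beta^{k}(x_1)$ is conjugate to $x_1$ in $F_n$ if and only if $n\mid k$, and the displayed equation forces $k\in n\mathbb{Z}$; write $k=-nj$.

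For the core of the proof, I would iterate $\beta^{n}(x_1)=wx_1w^{-1}$ and compute $(t^nw)^j$ in the HNN normal form. Induction yields $(t^nw)^j=t^{nj}W_j$ with $W_j:=\beta^{n(j-1)}(w)\cdots\beta^{n}(w)\,w$, and a direct check shows $t^nw$ commutes with $x_1$:
\[
(t^nw)x_1(t^nw)^{-1}=t^n(wx_1w^{-1})t^{-n}=t^n\beta^{n}(x_1)t^{-n}=\beta^{-n}\beta^{n}(x_1)=x_1,
\]
hence so do all its powers. Commuting $(t^nw)^j$ with $x_1$ gives $W_jx_1W_j^{-1}=\beta^{nj}(x_1)$, so substituting into the reduced equation produces $(uW_j)x_1(uW_j)^{-1}=x_1$ in $F_n$. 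Since $F_n$ is free and $x_1$ is a basis element, the centralizer of $x_1$ in $F_n$ is $\langle x_1\rangle$, so $uW_j=x_1^{l}$ for some $l\in\mathbb{Z}$. Rearranging gives $g=ut^k=x_1^{l}W_j^{-1}t^{-nj}=x_1^{l}(t^nw)^{-j}$, which by commutation is of the advertised form $(t^nw)^{k'}x_1^{l}$. The reverse inclusion is immediate from the commutation identity already verified.

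The main obstacle is the bookkeeping that identifies $(t^nw)^j$ with $t^{nj}W_j$ and links the two roles of $W_j$ (as the $F_n$-part of $(t^nw)^j$, and as the conjugator realizing $\beta^{nj}(x_1)=W_jx_1W_j^{-1}$), so that the single normalization ``$w$ does not end in a power of $x_1$'' actually pins down $w$. Uniqueness itself is a consequence of $C_{F_n}(x_1)=\langle x_1\rangle$: any two conjugators of $x_1$ to $\beta^n(x_1)$ differ on the right by a power of $x_1$, and the reduced-word condition selects a canonical representative.
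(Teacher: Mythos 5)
Your proposal is correct and follows essentially the same route as the paper: normal form $ut^k$ in the semidirect product, the $n$-cycle argument forcing $n\mid k$, iteration of $\beta^n(x_1)=wx_1w^{-1}$ to identify $(t^nw)^j$ with $t^{nj}W_j$, and the fact that $C_{F_n}(x_1)=\langle x_1\rangle$ to pin down the $F_n$-part. If anything, your explicit appeal to the free-group centralizer makes the final deduction cleaner than the paper's inductive computation, but the underlying argument is the same.
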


\begin{proof}
	Let $y \in \pi_1(W-U_{\hat{\beta}})$ be any element that commutes with $x_1$. 
	\[ x_1 y=y x_1 \]
	\[ y^{-1}x_1y=x_1 \]
	
	As an element of $\pi_1(W-U_{\hat{\beta}})$, $y$ can be uniquely written as $y=t^mz$ for some integer $m$ and $z\in \pi_1(D_n) $. Hence,
	
	\begin{align*}
		t^{-m} x_1 t^m &=z x_1 z^{-1} \\
		\beta^m(x_1)&=z x_1 z^{-1}
	\end{align*}
	
	Braid automorphisms map $x_i$ to conjugates of $x_j$. It induces a natural permutation $\pi $ of $\{ 1,2, \dots,n\}$. The number of cycles in the cycle decomposition of $\pi$ is the number of components of the the closure of the braid. The length of each cycle is the smallest iteration it takes to map $x_i$ to a conjugate of itself. Because the braid given by $U$ is a knot, it has only one component. So the the permutation induced by the braid is an $n$-cycle and $m=kn$ for some integer $k$. If $k=1$, then $\beta^n(x_1)=wx_1w^{-1}$ for some $w$ in the free group generated by $x_1,x_2, \dots, x_n$. $w$ is unique up to multiplication by $x_1^l$ on the right. We claim that for each integer $k$, $\beta^{kn}(x_1)=t^{-kn}(t^nw)^k x_1 (t^nw)^{-k}t^{kn}$, this implies the only elements that commute with $x_1$ are $(t^{n} w)^k x_1^l, k,l \in \mathbb{Z}$. $k=0$, this is obviously true. For $k>0$, notice that
	\[ t^{-kn}(t^nw)^k x_1 (t^nw)^{-k}t^{kn}=\beta^{(k-1)n}(w)\dots \beta^n(w) w x_1 w^{-1} \beta^n(w^{-1})\dots \beta^{(k-1)n}(w^{-1})\]
	
	For $k=1$, $\beta^n(x_1)=wx_1w^{-1}$ by the definition of $w$. We assume the claim is true for $k=i$. Then 
	\begin{align*}
		\beta^{(i+1)n}(x_1)&=\beta^n(\beta^{in}(x_1))\\
		&=\beta^n(\beta^{(i-1)n}(w)\dots \beta^n(w) w x_1 w^{-1} \beta^n(w^{-1})\dots \beta^{(i-1)n}(w^{-1})) \\
		&=\beta^{in}(w)\dots \beta^{2n}(w) \beta^n(w) \beta^n(x_1) \beta^n(w^{-1}) \beta^{2n}(w^{-1})\dots \beta^{in}(w^{-1}) \\
		&=\beta^{in}(w)\dots \beta^{2n}(w) \beta^n(w) w x_1 w^{-1} \beta^{2n}(w^{-1})\dots \beta^{in}(w^{-1})
	\end{align*}
	
	Replace $k$ by $-k$ gives
	\[ t^{kn}(t^nw)^{-k} x_1 (t^nw)^k t^{-kn}=\beta^{-kn}(w^{-1})\dots \beta^{-2n}(w^{-1}) \beta^{-n}(w^{-1}) x_1 \beta^{-n}(w) \beta^{-2n}(w)\dots \beta^{-kn}(w)\]
	
	For $k>0$, \[ \beta^{kn}(x_1)=\beta^{(k-1)n}(w)\dots \beta^{2n}(w) \beta^n(w) w x_1 w^{-1} \beta^{2n}(w^{-1})\dots \beta^{(k-1)}(w^{-1}) \]
	
	Apply $\beta^{-kn}$ on both sides of the equation,
	\[ x_1= \beta^{-n}(w) \dots \beta^{-(k-2)n}(w)\beta^{-(k-1)n}(w) \beta^{-kn}(w) \beta^{-kn}(x_1) \beta^{-kn}(w^{-1}) \beta^{-(k-2)n}(w^{-1})\dots \beta^{-n}(w^{-1})\]
	
	Rearrange the equation, 
	
	\[ \beta^{-kn}(x_1) =\beta^{-kn}(w^{-1})\dots \beta^{-2n}(w^{-1}) \beta^{-n}(w^{-1}) x_1 \beta^{-n}(w) \beta^{-2n}(w)\dots \beta^{-kn}(w)\]
	This proves the claim in every case and therefore finishes the proof of the lemma. 
\end{proof}

\begin{proposition}\label{NoEssentialTorus}
	There is a unique maximal subgroup of $\pi_1(W-U_{\hat{\beta}})$ isomorphic to $\mathbb{Z}\times \mathbb{Z}$ that contains $x_1$. Furthermore, this subgroup is $i_*(\pi_1(T))$ for a torus $T$ in $W-U_{\hat{\beta}}$ that contains the base point and $i_*(\pi_1(T))$ contains $x_1$. Here $i_*$ is the homomorphism induced by inclusion. 
\end{proposition}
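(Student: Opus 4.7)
The plan is to identify the desired maximal $\mathbb{Z}\times\mathbb{Z}$ subgroup with the centralizer $C(x_1)$ of $x_1$ in $\pi_1(W-U_{\hat\beta})$ computed in Lemma~\ref{HNN}, and then realize it geometrically as the peripheral subgroup of the torus $T=\partial U_{\hat\beta}$.

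First I would show $C(x_1)\cong \mathbb{Z}\times \mathbb{Z}$. The elements $t^n w$ and $x_1$ commute (a short computation from $t^{-n}x_1 t^n=wx_1w^{-1}$), so they give a surjection $\mathbb{Z}^2\twoheadrightarrow C(x_1)$ sending $(k,l)\mapsto (t^n w)^k x_1^l$. For injectivity, introduce the homomorphism $\varepsilon:\pi_1(W-U_{\hat\beta})\to \mathbb{Z}$ defined on the HNN presentation by $t\mapsto 1$ and $x_i\mapsto 0$; this respects the relations $t^{-1}x_i t=\beta(x_i)$ because each $\beta(x_i)$ is a word in the $x_j$'s. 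Then $\varepsilon((t^n w)^k x_1^l)=kn$, so vanishing forces $k=0$, and then $x_1^l=1$ forces $l=0$ because $x_1$ has infinite order. Uniqueness of a maximal $\mathbb{Z}\times\mathbb{Z}$ subgroup containing $x_1$ is immediate afterward: any such subgroup is abelian, hence centralizes $x_1$ and lies in $C(x_1)$; since $C(x_1)$ itself achieves this bound, it is the unique maximum.

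For the geometric realization, let $T=\partial U_{\hat\beta}$ and pick a basepoint on $T$ together with a path to the global basepoint so that the meridian of $T$ represents exactly $x_1$. The torus $T$ is incompressible in $W-U_{\hat\beta}$, so $i_*\pi_1(T)\cong\mathbb{Z}\times\mathbb{Z}$ injects; being abelian and containing $x_1$, it lies in $C(x_1)$. Writing $\lambda$ for the longitude of $T$, note that $\lambda$ is isotopic to $\hat\beta$ across $U_{\hat\beta}$, so $\varepsilon(\lambda)$ equals the winding number of $\hat\beta$ in $W$, which is $n$: the $n$ strands of $\beta$ each traverse the core once, and they close into a single knot because the permutation induced by $\beta$ is an $n$-cycle. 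Since $\varepsilon(t^n w)=n$ as well, the element $\lambda(t^n w)^{-1}$ lies in $\ker\varepsilon\cap C(x_1)=\langle x_1\rangle$, so $\lambda=(t^n w)x_1^k$ for some $k$, whence $i_*\pi_1(T)=\langle x_1,\lambda\rangle=\langle x_1,t^n w\rangle=C(x_1)$.

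The main obstacle is the final identification $i_*\pi_1(T)=C(x_1)$, which boils down to showing that $\lambda$ is primitive in $C(x_1)$ modulo $\langle x_1\rangle$; this is the winding-number computation $\varepsilon(\lambda)=\pm n$, and it is where the one-cycle structure of the permutation induced by $\beta$ must be used. Incompressibility of $T$ also deserves a brief check, but it is straightforward from the tools in hand: the meridian is nontrivial since $x_1$ has infinite order, and the longitude is nontrivial because $\varepsilon(\lambda)=n\neq 0$.
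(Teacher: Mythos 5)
Your proof is correct and follows essentially the same route as the paper: both reduce the algebraic claim to the centralizer computation of Lemma~\ref{HNN} and realize the subgroup geometrically on a torus parallel to $\partial U_{\hat{\beta}}$. You additionally supply two details the paper leaves implicit --- the injectivity of $\mathbb{Z}^2\to\langle t^n w, x_1\rangle$ via the exponent homomorphism $\varepsilon$, and the verification that $i_*\pi_1(T)$ is all of $C(x_1)$ via the winding-number computation $\varepsilon(\lambda)=\pm n$ --- both of which are sound and strengthen the argument.
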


\begin{proof}
	The uniqueness of such a subgroup is proved in Lemma \ref{HNN}. All the elements of this subgroup are $(t^n w)^k x_1^l$, $k,l \in \mathbb{Z}$. This is the subgroup generated by $x_1$ and $t^n w$. The homomorphism onto $\mathbb{Z} \times \mathbb{Z}$ which maps $x_1$ to $(0,1)$ and $t^nw$ to $(1,0)$ is an isomorphism. Clearly, $(t^n w)^k x_1^l$ is mapped to $(k,l)$. 
	
	Let $T$ be a torus in $W-U_{\hat{\beta}}$ that contains the base point and $i_*(\pi_1(T))$ contains $x_1$. Clearly this surface exists, the torus containing the base point and parallels to $\partial U_{\hat{\beta}}$ is one such surface. 
\end{proof}

\begin{lemma}\label{IsoSurfaces}
	Let $M$ be a 3-dimensional submanifold of $\mathbb{S}^3$ with two  incompressible boundary component $T_1$ and $T_2$ that are both torus. If $i_*(\pi_1(T_1))$ is conjugate to $i_*(\pi_1(T_2))$ in $\pi_1(M)$, then $T_1$ is parallel to $T_2$. 
\end{lemma}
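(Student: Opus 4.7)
The strategy is to construct an essential map $T^2 \times I \to M$ interpolating the two boundary inclusions, upgrade it to an embedding via Waldhausen's theorem, and identify its image with a product region cobounded by $T_1$ and $T_2$.

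First I would verify the ambient hypotheses needed downstream. Since $M \subset \mathbb{S}^3$, any embedded $\mathbb{S}^2 \subset M$ bounds a ball in $\mathbb{S}^3$; the incompressibility of the boundary tori forces that ball to lie inside $M$, so $M$ is irreducible. Incompressibility of $T_1$ and $T_2$ gives injections $\pi_1(T_j) \hookrightarrow \pi_1(M)$ with image isomorphic to $\mathbb{Z}\times\mathbb{Z}$. Fixing basepoints $p_j \in T_j$ and a path $\gamma$ from $p_1$ to $p_2$ realizing the conjugation hypothesis produces an isomorphism $\phi : \pi_1(T_1,p_1) \to \pi_1(T_2,p_2)$; since tori are $K(\mathbb{Z}\times\mathbb{Z},1)$'s, $\phi$ is induced by a homeomorphism $h : T_1 \to T_2$.

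Next I would construct the interpolating map. Standard obstruction theory for maps into aspherical targets shows that $i_1$ and $i_2 \circ h : T_1 \to M$, which induce conjugate homomorphisms on $\pi_1$, are freely homotopic; such a homotopy is precisely a continuous map $F : T^2 \times I \to M$ with $F|_{T^2 \times \{0\}} = i_1$, $F|_{T^2 \times \{1\}} = i_2 \circ h$, and $F(\partial(T^2 \times I)) \subset \partial M$. By construction $F$ is $\pi_1$-injective, hence an essential map of a Haken pair into the irreducible, boundary-incompressible pair $(M, \partial M)$.

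I would then invoke Waldhausen's theorem on essential maps of Haken $3$-manifolds to deform $F$, rel its boundary restrictions, to an embedding $\widetilde F : T^2 \times I \hookrightarrow M$. Its image $N$ is a compact submanifold with $\partial N = T_1 \sqcup T_2'$, where $T_2' \subset \partial M$ is isotopic in $M$ to $T_2$; two disjoint incompressible tori in $\partial M$ that are isotopic inside an irreducible $3$-manifold must coincide as boundary components, so $T_2' = T_2$ and $N \cong T^2 \times I$ realizes the parallelism. The main obstacle I anticipate is the Waldhausen step: one must set up $F$ so the promotion to an embedding can be done rel $\partial(T^2 \times I)$, and then rule out that the second end of $N$ reaches a component of $\partial M$ other than $T_2$; both points are standard but require care in using the algebraic strength of the conjugacy hypothesis to pin down the correct boundary component.
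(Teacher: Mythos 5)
Your strategy---use asphericity to convert the conjugacy hypothesis into a free homotopy carrying $T_1$ into $T_2$, then promote that homotopy to an embedded product region---is a genuinely different route from the paper's. The paper stays inside $\mathbb{S}^3$: it re-embeds $M$ as the exterior of a two-component link $L$, homotopes the meridian of one component onto the other boundary torus, and uses Dehn filling together with $\pi_1$-injectivity of the boundary of a nontrivial knot exterior to force both components of $L$ to be unknots and $L$ to be the Hopf link. Your argument is intrinsic to $M$ and would prove the statement for an arbitrary Haken manifold, which is a real advantage; but as written it has two concrete gaps.

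First, the irreducibility step is false as stated: incompressibility of the boundary tori does not force an embedded sphere to bound a ball in $M$. The exterior of a split link whose two components are nontrivial knots is a submanifold of $\mathbb{S}^3$ with two incompressible torus boundary components, yet its splitting sphere bounds no ball there. You must invoke the conjugacy hypothesis here: every sphere in $M$ separates (since it separates $\mathbb{S}^3$); a piece meeting neither torus has boundary a single sphere and is a ball by Alexander's theorem, while a sphere separating $T_1$ from $T_2$ would split $\pi_1(M)$ as a free product with $i_*(\pi_1(T_1))$ and $i_*(\pi_1(T_2))$ in different factors, and a nontrivial subgroup of one free factor is never conjugate into the other. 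Second, the Waldhausen step invokes a theorem that does not exist in the generality you state: there is no result deforming an arbitrary essential map of a Haken pair to an embedding (essential singular annuli and tori are in general only homotopic into the characteristic submanifold), and Waldhausen's rigidity theorem concerns homotopy equivalences, which your $F\colon T^2\times I\to M$ is not known to be. What you actually need is Waldhausen's lemma on incompressible surfaces (Lemma 5.3 of the 1968 paper \emph{On irreducible 3-manifolds which are sufficiently large}): in an irreducible $3$-manifold, an incompressible boundary surface that can be homotoped into another incompressible boundary surface is parallel to it. That lemma does apply here, precisely because your conjugacy identifies the \emph{full} peripheral subgroups, so the terminal map $i_2\circ h$ is a homeomorphism onto $T_2$ rather than a proper covering; with that citation in place and the irreducibility argument repaired, your proof closes, and your worry about the second end landing on the wrong boundary component evaporates since the homotopy ends on $T_2$ by construction.
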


\begin{proof}
	Such a submanifold $M$ is either the exterior of a $2$ components link $L$ in $\mathbb{S}^3$ or it is a submanifold of a solid torus embedded in $\mathbb{S}^3$. The two cases are not exclusive. If $M$ is a submanifold of a solid torus embedded in $\mathbb{S}^3$, then since we only care about $M$ not how it is embedded in $\mathbb{S}^3$, we can construct a new embedding of $M$ such that it can be viewed as a link complement $\mathbb{S}^3-L$ where one component of $L$ is the unknot in $\mathbb{S}^3$. 
 
    Denote the components of $L$ by $K_1$ and $K_2$. Then $T_1$ and $T_2$ are the boundaries corresponds to $K_1$ and $K_2$ respectively. The meridian $m_1$ of $T_1$ can be deformed into $T_2$ since $i_*(\pi_1(T_1))$ is conjugate to $i_*(\pi_1(T_2))$. Furthermore, $m_1$ can be deformed into a simple closed curve in $T_2$, this curve is non trivial in $i_*(\pi_1(T_1))$, so it is also non-trivial in $i_*(M)$ and $i_*(\pi_1(T_2))$. Therefore it can be identified with the curve $\gamma=(p,q)$ where $p$ and $q$ are coprime. The Dehn filling of $m_1$ makes $\gamma$ trivial in the exterior of $K_2$. This is impossible if $K_2$ is knotted since the boundary is $\pi_1$-injective in this case. So $K_2$ has to be the unknot. In particular, $m_1$ is homotopic to the longitude of $K_2$. Do the same argument for $m_2$, we have $K_1$ is also the unknot. Moreover, $K_1$ intersects a disc bounded by $K_2$ at just one point. This implies that $L$ has to be the Hopf link. Hence, $M$ is homeomorphic to $T\times [0,1]$ and $T_1$ is parallel to $T_2$. 
\end{proof}

\begin{theorem}\label{NoGeoTorus}
	There is no essential torus in $W-U_{\hat{\beta}}$ such that the meridian of $U_{\hat{\beta}}$ is homotopic to a curve on it. In particular, there is no swallow-follow torus in $W-U_{\hat{\beta}}$. 
\end{theorem}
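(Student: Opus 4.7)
The plan is proof by contradiction. Assume there is an essential torus $T \subset W - U_{\hat{\beta}}$ and a simple closed curve $c \subset T$ freely homotopic in $W - U_{\hat{\beta}}$ to the meridian $m$ of $U_{\hat{\beta}}$. Incompressibility of $T$ forces the induced map $i_* \colon \pi_1(T) \to \pi_1(W - U_{\hat{\beta}})$ to be injective, so $i_*(\pi_1(T)) \cong \mathbb{Z} \times \mathbb{Z}$. After choosing a suitable path from the basepoint to $T$, the free homotopy between $c$ and $m$ places a conjugate of $x_1 = [m]$ inside $i_*(\pi_1(T))$; conjugating $T$'s basepoint path, I may assume $x_1 \in i_*(\pi_1(T))$ itself.

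Since $i_*(\pi_1(T))$ is abelian, every one of its elements commutes with $x_1$, so by Lemma \ref{HNN} it sits inside the centralizer $C(x_1) = \{(t^{n} w)^{k} x_1^{l} : k,l \in \mathbb{Z}\}$. By Proposition \ref{NoEssentialTorus}, $C(x_1)$ equals $i_*(\pi_1(T_{\partial}))$ where $T_{\partial}$ is a torus parallel to $\partial U_{\hat{\beta}}$. I therefore obtain two rank-two free abelian subgroups with $i_*(\pi_1(T)) \subseteq i_*(\pi_1(T_{\partial}))$.

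From here I want to apply Lemma \ref{IsoSurfaces} to conclude that $T$ is parallel to $\partial U_{\hat{\beta}}$, which would contradict the assumption that $T$ is essential (in particular, not $\partial$-parallel). Concretely, I cut $W - U_{\hat{\beta}}$ along $T$ and let $M$ be the resulting component having both $T$ and $\partial U_{\hat{\beta}}$ as incompressible boundary tori; re-embedding $M$ into $\mathbb{S}^3$ as a two-component link exterior (as in the proof of Lemma \ref{IsoSurfaces}), I intend to check that the two peripheral subgroups of $M$ are conjugate and invoke Lemma \ref{IsoSurfaces}.

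The main obstacle is bridging the algebraic gap between the inclusion $i_*(\pi_1(T)) \subseteq i_*(\pi_1(T_{\partial}))$ and the conjugacy required by Lemma \ref{IsoSurfaces}: a priori $i_*(\pi_1(T))$ could be a proper finite-index sublattice of $C(x_1)$ that still contains $x_1$ primitively. I plan to close this gap geometrically by upgrading the free homotopy from $m$ to $c$ to an \emph{embedded} annulus $A \subset W - U_{\hat{\beta}}$ with $\partial A = m \cup c$, using the standard annulus-theorem machinery for incompressible surfaces (both curves are essential in incompressible tori). This annulus simultaneously furnishes the conjugator identifying the two peripheral subgroups and, together with portions of $T$ and $\partial U_{\hat{\beta}}$, bounds the desired $T^{2} \times I$-region. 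The ``in particular'' clause is then immediate, since a swallow-follow torus is by definition an essential torus containing a curve isotopic to the meridian of $\hat{\beta}$, and is therefore ruled out by the first assertion.
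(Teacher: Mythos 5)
Your overall route is the same as the paper's: use Lemma \ref{HNN} and Proposition \ref{NoEssentialTorus} to show that any $\mathbb{Z}\times\mathbb{Z}$ subgroup containing a conjugate of $x_1$ lies in the centralizer $C(x_1)=\langle t^nw,\, x_1\rangle$, identify $C(x_1)$ with the peripheral subgroup carried by a torus parallel to $\partial U_{\hat{\beta}}$, and then invoke Lemma \ref{IsoSurfaces} to force $T$ to be boundary-parallel, contradicting essentiality. You have also put your finger on exactly the point where the paper's two-sentence proof is thinnest: Lemma \ref{IsoSurfaces} requires the two peripheral subgroups to be \emph{conjugate}, whereas the centralizer computation only yields the containment $i_*(\pi_1(T))\subseteq C(x_1)$, and a priori $i_*(\pi_1(T))$ could be a proper finite-index sublattice that still contains $x_1$ (necessarily primitively).

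The difficulty is that your proposed repair does not actually close this gap. An embedded annulus $A$ with $\partial A = m\cup c$ only witnesses that the single element $x_1$ is conjugate into $i_*(\pi_1(T))$ --- which is precisely the hypothesis you started from --- and gives no control over the second generator $t^nw$ of $C(x_1)$; so it does not ``furnish the conjugator identifying the two peripheral subgroups,'' and $A$ together with pieces of $T$ and $\partial U_{\hat{\beta}}$ does not by itself cobound a $T^2\times I$ (cutting along such an annulus could just as well produce a genus-two handlebody-like region). There is also a smaller unaddressed point: to apply Lemma \ref{IsoSurfaces} you need the component of $W-U_{\hat{\beta}}$ cut along $T$ that contains $\partial U_{\hat{\beta}}$ to have \emph{exactly} two torus boundary components, so you must first rule out the case where $\partial W$ lies in the same component. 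What is really needed at the crux is a statement of the form: an embedded incompressible torus whose image subgroup lies (possibly with finite index) inside a peripheral $\mathbb{Z}\times\mathbb{Z}$ is parallel to that boundary torus --- for instance via Waldhausen's results on boundary-parallel incompressible surfaces, or by passing to the cover of $W-U_{\hat{\beta}}$ corresponding to $C(x_1)$ and analyzing the lift of $T$ there. As written, your key step remains a plan rather than a proof; in fairness, the paper's own proof of Theorem \ref{NoGeoTorus} elides the same step.
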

\begin{proof}
	If there is one such essential torus, then there are two different $\mathbb{Z} \times \mathbb{Z}$ subgroups of $\pi_1(W-U_{\hat{\beta}})$ containing $x_1$ since the boundary torus $\partial U_{\hat{\beta}}$ is another one. This contradicts Proposition \ref{NoEssentialTorus} and Lemma \ref{IsoSurfaces}. The second part is true because for any swallow follow torus $S$ in $W-U_{\hat{\beta}}$, the meridian of $U_{\hat{\beta}}$ is homotopic to the meridian of $S$.
\end{proof}

\begin{theorem}\label{primeSat}
	A satellite knot with a closed braid pattern of winding number greater than $1$ is prime. 
\end{theorem}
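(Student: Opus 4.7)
The strategy is a proof by contradiction. Assume the satellite $K$ is composite; then there is an essential decomposing 2-sphere $S\subset\mathbb{S}^3$, transverse to $K$, meeting it in exactly two points, with both complementary tangles knotted. Write the exterior as $X_K:=\mathbb{S}^3\setminus N(K)$. Let $V$ be the companion solid torus and $K_2$ its nontrivial core. Because $K_2$ is knotted and the winding number $n>0$, the torus $\partial V$ is incompressible in both $V\setminus N(K)$ and $\mathbb{S}^3\setminus V$. I will isotope $S$ to meet $\partial V$ transversely in a minimum number of circles (each then necessarily essential on $\partial V$) and analyze the cases $S\cap\partial V=\emptyset$ and $S\cap\partial V\neq\emptyset$.

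In Case 1 ($S\cap\partial V=\emptyset$), $K\subset V$ together with $S\cap K\neq\emptyset$ force $S\subset V$. I construct a swallow-follow torus inside $V\setminus N(K)$ out of $S$: pick one of the two balls bounded by $S$, close the knotted arc it contains into a genuine knot $K^{\sharp}\subset V$ by an arc in a collar of $S$, and let $T=\partial N(K^{\sharp})$, pushed slightly off $K$. The meridian $\mu_K$ is then freely homotopic to a curve on $T$, and standard arguments show $T$ is incompressible in $X_K$ (each side of $S$ is knotted). It is also not boundary-parallel to $\partial V$: if it were, the class $[\mu_K]$ would lie in the image of $H_1(\partial V)\to H_1(X_K)=\mathbb{Z}$, which is $n\mathbb{Z}$ since a meridian disk of $V$ meets $K$ in $n$ coherently oriented braid strands; this is impossible for $n>1$. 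Thus under the identification $V\setminus N(K)\cong W\setminus U_{\hat\beta}$ we obtain an essential torus in $W\setminus U_{\hat\beta}$ carrying a meridian of $U_{\hat\beta}$, contradicting Theorem \ref{NoGeoTorus}.

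In Case 2 ($S\cap\partial V\neq\emptyset$), pick any innermost disk $D\subset S$ cut out by $\partial V$; its boundary is essential on $\partial V$. If $D\subset\mathbb{S}^3\setminus V$, then $D$ would be a compressing disk for $\partial V$ in the exterior of $K_2$, which is impossible. Hence $D\subset V$, $\partial D$ is a meridian of $V$, and $D$ is isotopic to a meridian disk of $V$; the braid hypothesis gives $|D\cap K|=n$. Combining $|D\cap K|\leq|S\cap K|=2$ with $n\geq 2$ forces $n=2$, and both points of $S\cap K$ sit in $D$. Every other innermost disk of $S$ must then be disjoint from $K$, but cannot live in $V$ (a meridian disk cannot miss $K$ when $n=2$) nor in $\mathbb{S}^3\setminus V$ (by incompressibility of $\partial V$ there). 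Therefore $|S\cap\partial V|=1$, and the complementary disk $D'\subset\mathbb{S}^3\setminus V$ has boundary a meridian of $V$; this is a forbidden compressing disk for $\partial V$ in the exterior of $K_2$, closing the contradiction.

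The main obstacle will be Case 1, specifically certifying that the swallow-follow torus built from $S$ is essential in $V\setminus N(K)$ and not just in $X_K$. The non-boundary-parallelism is the delicate point, and it relies precisely on the winding-number hypothesis $n>1$ through the homological observation above; once $T$ is so certified, Theorem \ref{NoGeoTorus} delivers the contradiction immediately.
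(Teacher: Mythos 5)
Your proposal takes essentially the same route as the paper: reduce to the case where the decomposing sphere $S$ lies inside the companion solid torus $V$ (the paper cites Cromwell's Theorem 4.4.1 for this step; your Case 2 innermost-disk analysis reproves it directly), then produce a swallow-follow torus in $V\setminus N(K)\cong W\setminus U_{\hat\beta}$ carrying the meridian of $K$ and contradict Theorem \ref{NoGeoTorus}. Your homological certification that this torus is not parallel to $\partial V$ (the image of $H_1(\partial V)\to H_1(X_K)\cong\mathbb{Z}$ is $n\mathbb{Z}$, while the meridian generates) makes explicit a point the paper leaves implicit, and is where the hypothesis $n>1$ genuinely enters. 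The one thing you must repair is the construction of the torus in Case 1: as written, $T=\partial N(K^{\sharp})$ ``pushed slightly off $K$'' does not produce the right object. If $N(K^{\sharp})$ contains the arc $B\cap K$, then $K$ crosses $T$ transversally in two points, so $T$ is not a surface in the knot exterior; if instead you push the whole of $K^{\sharp}$ off $K$, then $T$ bounds a solid torus disjoint from $K$ and its meridian disk compresses it. The correct torus --- the one the paper uses --- is $T=\partial\bigl(B\setminus N(K)\bigr)$, equivalently $\partial\bigl(B\cup N(K)\bigr)$, where $B$ is the ball bounded by $S$ that is contained in $V$ (only one of the two balls is, so the choice is not free). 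With that torus your argument goes through: the meridian of $K$ is freely homotopic into $T$, incompressibility follows from both arcs being knotted, non-parallelism to $\partial N(K)$ (the other boundary component of $W\setminus U_{\hat\beta}$, which you should also address) follows from the arc $B\cap K$ being knotted, and your winding-number computation rules out parallelism to $\partial V=\partial W$; Theorem \ref{NoGeoTorus} then gives the contradiction.
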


\begin{proof}
	Let $K$ be a satellite with companion torus $V$ and pattern $P \subset W$. Since the pattern $P$ is a closed braid of winding number greater than $1$, $K$ is a proper satellite. Proof by contradiction, assume there is a factorising sphere $S$ which decomposes $K$ as a product. We assume that $K$, $S$ and $\partial V$ are in general position. 
	
	The argument in the proof of Theorem 4.4.1 in Cromwell\cite{cromwell} can be adapt without any changes. Therefore, $S$ must lie inside $V$ bounding a 3-ball $B \subset V$, and its preimage $h^{-1}(S)$ in $W$ decomposes the pattern $P$ as product of two nontrivial factors. 
	
	Assign the pattern $P$ an orientation, then $P$ intersects $h^{-1}(S)$ in two points, one for $P$ entering $B$, one leaving it. Let $U$ be an open tubular neighborhood of $P$, then $T:=\partial(B-U)$ is a swallow follow torus for $P$. Since $P$ is a closed braid in $W$, this contradicts Theorem \ref{NoGeoTorus}. Hence $h(P \cap B)$ is an unknotted tangle. So $K$ must be prime. 
\end{proof}
The last step may seem intuitively obvious, but the author didn't found a satisfying geometric proof. We have another proof using JSJ-decomposition, but we think the algebraic approach is more clear and elegant in this case. Theorem \ref{primeSat} is probably known, but as the author can tell, it was not written down anywhere in the literature.  

\section{The fundamental groups of soleknots complement in $\mathbb{S}^3$}

It's easy to see that all the higher homotopy groups of the soleknot complements in $\mathbb{S}^3$ are trivial. This is the following. 

\begin{theorem}\label{K(G,1)}
	Let $\Sigma \subset \mathbb{S}^3$ be a soleknot. Then $\mathbb{S}^3-\Sigma$ is an Eilenberg-MacLane space $K(G,1)$ where $G=\pi_1(\mathbb{S}^3-\Sigma)$. 
\end{theorem}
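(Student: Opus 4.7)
The plan is to exhibit $\mathbb{S}^3-\Sigma$ as a countable ascending union of open submanifolds each of which is already known to be aspherical, and then use a compactness argument to transfer the vanishing of higher homotopy groups to the union.

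First, I would fix a defining sequence $\{N_n\}_{n=0}^{\infty}$ of the embedding $\Sigma \subset \mathbb{S}^3$. Since the $N_n$ are closed solid tori with $N_{n+1}\subset N_n$ and $\Sigma=\bigcap_{n=0}^{\infty}N_n$, we have
\[ \mathbb{S}^3-\Sigma \;=\; \bigcup_{n=0}^{\infty} U_n, \qquad U_n := \mathbb{S}^3-N_n, \]
and $U_n\subset U_{n+1}$ is an open submanifold of $\mathbb{S}^3-\Sigma$.

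Next, I would observe that each $U_n$ is homotopy equivalent to the exterior of a knot in $\mathbb{S}^3$, namely the core of the solid torus $N_n$; when this core is unknotted, $U_n\simeq S^1$ is already a $K(\mathbb{Z},1)$. Otherwise $U_n$ deformation retracts onto a compact, orientable, irreducible $3$-manifold with incompressible torus boundary, i.e.\ a Haken manifold; by the sphere theorem $\pi_2(U_n)=0$, and by the classical theorem of Waldhausen the universal cover of a Haken manifold with nonempty boundary is contractible. Thus each $U_n$ is a $K(\pi,1)$.

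Finally, to conclude that $\pi_k(\mathbb{S}^3-\Sigma)=0$ for every $k\geq 2$, I would take an arbitrary continuous map $f\colon S^k\to \mathbb{S}^3-\Sigma$. Its image $C=f(S^k)$ is compact, and the sets $C\cap N_n$ form a nested family of compact subsets with
\[ \bigcap_{n=0}^{\infty} (C\cap N_n) \;=\; C\cap \Sigma \;=\; \emptyset. \]
By the finite intersection property, $C\cap N_n=\emptyset$ for some $n$, so $f$ factors through $U_n$. As $U_n$ is a $K(\pi,1)$, the map $f$ is null-homotopic in $U_n$, hence in $\mathbb{S}^3-\Sigma$. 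This gives $\pi_k(\mathbb{S}^3-\Sigma)=0$ for all $k\geq 2$, whence $\mathbb{S}^3-\Sigma$ is an Eilenberg--MacLane space $K(G,1)$ with $G=\pi_1(\mathbb{S}^3-\Sigma)$.

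The only real obstacle is invoking the classical asphericity of knot exteriors; once that input is in place, the rest is a routine compact-image/ascending-union argument, and crucially we do not have to track the fundamental groups themselves at any stage.
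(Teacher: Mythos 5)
Your proof is correct and follows essentially the same route as the paper: use compactness of the image of a sphere to push it into the exterior $\mathbb{S}^3-N_n$ of a defining solid torus, which is a knot exterior and hence aspherical, so the map is null-homotopic. You merely spell out the two ingredients the paper leaves implicit (the finite intersection property step and the classical asphericity of knot exteriors via the sphere theorem and Waldhausen), which is fine.
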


\begin{proof}
	Let $n\geq 2$ and $f$ be any continuous map from $\mathbb{S}^n$ to $\mathbb{S}^3 -\Sigma$. By continuity of $f$ and compactness of $\mathbb{S}^n$, $f(\mathbb{S})$ is contained in a compact subset of $\mathbb{S}^3-\Sigma$. Therefore there exists a torus $T$(this is one of the defining torus of $\Sigma$) in $\mathbb{S}^3- \Sigma$ such that $f(\mathbb{S}^n)$ is contained in the compact component of $\mathbb{S}^3 -\Sigma- T$. This component is a knot complement. It's a classical theorem of knot theory that knot complements are aspherical. Hence, $f$ restricts on this component is homotopic to a constant map. Therefore it is homotopic to a constant map in $\mathbb{S}^3-\Sigma$. This shows that $\pi_n(\mathbb{S}^3-\Sigma)=0$ for every $n\geq 2$ and $\mathbb{S}^3-\Sigma$ is Eilenberg-MacLane space $K(G,1)$, where $G=\pi_1(\mathbb{S}^3-\Sigma)$. 
\end{proof}

\begin{corollary}
	Every isomorphism between soleknot groups is realized by a homotopy equivalence unique up to homotopy. 
\end{corollary}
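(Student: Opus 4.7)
The plan is to deduce the corollary from Theorem \ref{K(G,1)} together with the classical classification of maps into an Eilenberg--MacLane space. First I would note that both $\mathbb{S}^3-\Sigma$ and $\mathbb{S}^3-\Sigma'$ are $K(G,1)$ spaces by Theorem \ref{K(G,1)}, and that as open subsets of $\mathbb{S}^3$ they have the homotopy type of CW complexes (in fact, of finite-dimensional complexes, since they are open 3-manifolds).

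Next I would invoke the standard fact from algebraic topology: if $X$ is a CW complex and $Y$ is a $K(G,1)$ space with a chosen basepoint, then the map
\[
[X,Y]_* \longrightarrow \mathrm{Hom}(\pi_1(X,x_0),\pi_1(Y,y_0)), \qquad [f]\longmapsto f_*,
\]
is a bijection on pointed homotopy classes, and the unpointed version gives a bijection with conjugacy classes of homomorphisms. This is proved by obstruction theory, using that $\pi_n(Y)=0$ for $n\geq 2$ to kill all higher obstructions when extending a map cell-by-cell, and the same vanishing to kill obstructions to constructing the homotopy between two realizations.

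Then, given an isomorphism $\varphi:\pi_1(\mathbb{S}^3-\Sigma)\to\pi_1(\mathbb{S}^3-\Sigma')$, the classification provides (up to homotopy) a unique map $f:\mathbb{S}^3-\Sigma\to\mathbb{S}^3-\Sigma'$ with $f_*=\varphi$, and similarly a map $g$ with $g_*=\varphi^{-1}$. The compositions $g\circ f$ and $f\circ g$ induce the identity on the respective fundamental groups; by the uniqueness clause of the same classification, applied with the identity maps as competing realizations, $g\circ f\simeq \mathrm{id}$ and $f\circ g\simeq \mathrm{id}$. Hence $f$ is a homotopy equivalence, and uniqueness of $f$ up to homotopy is already built into the classification.

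The only real obstacle is a bookkeeping point rather than a deep one: verifying that $\mathbb{S}^3-\Sigma$ is suitable for the obstruction-theoretic argument, i.e.\ has the homotopy type of a CW complex. Since $\Sigma$ is a compact subset of $\mathbb{S}^3$ and the complement is an open 3-manifold, this is standard (every topological manifold has a CW homotopy type, and open subsets of $\mathbb{S}^3$ can in fact be triangulated). With this point handled, the rest is a direct appeal to the $K(G,1)$ classification.
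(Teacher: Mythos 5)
Your proposal is correct and is exactly the argument the paper intends: the corollary is stated as an immediate consequence of Theorem \ref{K(G,1)} (the paper gives no separate proof), and the standard classification of maps into aspherical CW-type spaces is the only ingredient needed. Your additional care about the CW homotopy type of the open complement and the pointed-versus-free distinction is a reasonable and harmless elaboration of the same route.
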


\begin{corollary}
	Every automorphism of a soleknot group is induced by a homotopy equivalence unique up to homotopy. Every self homotopy equivalence induces an automorphism up to conjugacy. 
\end{corollary}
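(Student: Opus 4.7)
The plan is to deduce both assertions from Theorem \ref{K(G,1)} together with the standard classification of maps between Eilenberg--MacLane spaces. Since $\mathbb{S}^3-\Sigma$ is an open $3$-manifold, it has the homotopy type of a CW complex, so the general theory applies cleanly.

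For the first assertion, given an automorphism $\varphi \colon G \to G$ of the soleknot group $G = \pi_1(\mathbb{S}^3-\Sigma)$, I would invoke the classical fact that for CW complexes $X,Y$ with $Y$ a $K(H,1)$, pointed homotopy classes of pointed maps $(X,x_0) \to (Y,y_0)$ are in natural bijection with homomorphisms $\pi_1(X,x_0) \to \pi_1(Y,y_0)$, the bijection being $[f] \mapsto f_*$. Applied with $X=Y=\mathbb{S}^3-\Sigma$, this produces a pointed map $f \colon \mathbb{S}^3-\Sigma \to \mathbb{S}^3-\Sigma$ with $f_* = \varphi$, unique up to pointed homotopy. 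Because $\varphi$ is an isomorphism and all higher homotopy groups vanish (Theorem \ref{K(G,1)}), Whitehead's theorem forces $f$ to be a homotopy equivalence.

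For the second assertion, any self-homotopy equivalence $g \colon \mathbb{S}^3-\Sigma \to \mathbb{S}^3-\Sigma$ induces, after choosing a basepoint and a path from $g(x_0)$ to $x_0$, an automorphism $g_* \colon G \to G$. A different choice of connecting path changes $g_*$ by an inner automorphism, so $g$ determines an element of $\operatorname{Out}(G)$, i.e.\ an automorphism up to conjugacy. This gives the map from self-equivalences (up to free homotopy) to $\operatorname{Aut}(G)/\operatorname{Inn}(G)$; the previous paragraph shows it is surjective, and the uniqueness part of the $K(G,1)$ classification shows that the lift $f$ producing a given $\varphi$ is unique up to pointed homotopy, hence up to (free) homotopy the induced class is well-defined.

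There is no real obstacle here, as everything reduces to the standard representability theorem for $K(G,1)$'s; the only point to verify is that $\mathbb{S}^3-\Sigma$ has the homotopy type of a CW complex, which follows because it is an open subset of a smooth manifold and hence admits a CW structure (for instance via a triangulation of a nested exhaustion by compact submanifolds, or more directly because open manifolds are ANRs).
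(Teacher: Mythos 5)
Your proposal is correct and follows the same route the paper intends: the paper states this corollary without proof as an immediate consequence of Theorem \ref{K(G,1)}, relying on exactly the standard classification of maps into aspherical CW complexes that you spell out. Your additional care about $\mathbb{S}^3-\Sigma$ having the homotopy type of a CW complex and about basepoint/inner-automorphism issues fills in details the paper leaves implicit, but does not change the argument.
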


Before proving our next theorem, we first introduce some notations and terms. First, recall Proposition \ref{max} gives a unique maximal defining sequence for a soleknot. For a knotted soleknot $\Sigma$, this maximal defining sequence gives a canonical sequence of knot exteriors $M_n$ and knot groups $K_n$ and homomorphisms, 
\[ K_0 \xmapsto{\varphi_0} K_1 \xmapsto{\varphi_1} K_2 \xmapsto{\varphi_2} \dots \xmapsto{\varphi_{n-1}} K_n \xmapsto{\varphi_n} \dots  \] 
where all the $\varphi_n$'s are naturally induced by inclusion of knot complements. By Lemma 2.1 in \cite{CMR}, for any solid tori $T_1$ and $T_2$ in $\mathbb{S}^3$ with $T_1 \subset int(T_2)$, let $J$ be the core curve of $T_1$ and $K $ the meridian curve of $T_2$, linking number $lk(J, K)\neq 0$ implies that the homomorphism $\pi_1(\mathbb{S}^3-T_1) \mapsto \pi_1(\mathbb{S}^3-T_2)$ induced by inclusion is injective. Clearly, any thick braid of winding number greater than 1 satisfies the condition, therefore $\varphi_n$'s are all injective. And the base point for all the fundamental groups will be a chosen point in $\mathbb{S}^3-N_0$. The direct limit of this sequence is the fundamental group of $\mathbb{S}^3-\Sigma$. Call this sequence the \textbf{filtration} of $\pi_1(\mathbb{S}^3-\Sigma)$. We will also denote the filtration by  $(K_n, \varphi_n)$. 

\begin{theorem}\label{homeomorphic type and filtration}
	Let $\Sigma$ and $\Sigma'$ be two knotted soleknots in $\mathbb{S}^3$, $K_n \simeq K_n' $ for each $n\geq 0$ if and only if they are equivalent or their mirror images are equivalent.
\end{theorem}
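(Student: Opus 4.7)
For the ``if'' direction, suppose $h : \mathbb{S}^3 \to \mathbb{S}^3$ is an orientation preserving homeomorphism with $h(\Sigma) = \Sigma'$. Then $\{h(N_n)\}$ is a maximal defining sequence for $\Sigma'$ with $h(N_0)$ knotted, so by Proposition~\ref{max} it is strongly equivalent to $\{N_n'\}$; using the remark (after Proposition~\ref{max}) that strong equivalence implies ambient equivalence, there is an orientation preserving $g : \mathbb{S}^3 \to \mathbb{S}^3$ with $g(h(N_n)) = N_n'$ for every $n$. Thus $g \circ h$ sends the core $J_n$ of $N_n$ to $J_n'$, giving $J_n \simeq J_n'$ and hence $K_n \cong K_n'$. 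If instead $\Sigma$ is equivalent to the mirror of $\Sigma'$, precompose with a reflection to obtain $J_n \simeq \overline{J_n'}$; this still yields $K_n \cong K_n'$ since mirror images have isomorphic knot groups.

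For the converse, assume $K_n \cong K_n'$ for every $n \ge 0$. Theorem~\ref{primeSat1} makes $J_n$ and $J_n'$ prime for $n \ge 1$, so Gordon--Luecke promotes $K_n \cong K_n'$ to a homeomorphism of knot exteriors, equivalently $J_n \simeq J_n'$ or $J_n \simeq \overline{J_n'}$. Write $\epsilon_n \in \{+,-\}$ for this mirror choice. The plan is to (i) show $\epsilon_n$ is independent of $n$, after which, possibly replacing $\Sigma'$ by its mirror, one may assume $\epsilon_n = +$, and then (ii) promote these per-level equivalences into a strong equivalence of the two maximal defining sequences.

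For (i), combine JSJ uniqueness with the maximality clause of Proposition~\ref{max}: each $W_k := N_k \setminus \mathrm{int}(N_{k+1})$ and the outer exterior $E_0 := \mathbb{S}^3 \setminus \mathrm{int}(N_0)$ (further decomposed inside if $J_0$ is itself composite or satellite) are JSJ pieces of $M_n$ for every $n > k$, forming a linear chain $E_0 - W_0 - \cdots - W_{n-1}$ in the JSJ graph of $M_n$. The homeomorphism of exteriors realizing $J_n \simeq J_n'$ (respectively $\simeq \overline{J_n'}$) must identify this chain with its counterpart on the primed side piece by piece, and $J_0$ itself is recovered from $E_0$ by the unique meridional Dehn filling producing $\mathbb{S}^3$ (again by Gordon--Luecke). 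Hence $\epsilon_k = \epsilon_n$ for every $0 \le k \le n$, and a single $\epsilon$ works uniformly.

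For (ii), build $f_0, f_1, \ldots$ inductively so that $f_n : (N_{n-1}, N_n) \to (N_{n-1}', N_n')$ is orientation preserving and $f_n|_{\partial N_{n-1}} = f_{n-1}|_{\partial N_{n-1}}$. Given $f_0, \ldots, f_n$, extract from the knot equivalence $(\mathbb{S}^3, J_{n+1}) \simeq (\mathbb{S}^3, J_{n+1}')$ obtained in step (i) a homeomorphism of pairs $g : (N_n, N_{n+1}) \to (N_n', N_{n+1}')$, then adjust it by a self-homeomorphism of $(N_n, N_{n+1})$ so that it agrees with $f_n$ on $\partial N_n$. This boundary matching is the crux of the argument: it amounts to realizing specific mapping-class-group elements of $\partial N_n$ by self-homeomorphisms of the braid complement $W_n$, and this is precisely where the González--Acu\~na--Whitten analysis of knot subgroups of knot groups \cite{GAW} is brought to bear. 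Once the $f_n$'s exist, the strong equivalence they define yields an orientation preserving homeomorphism $\mathbb{S}^3 \to \mathbb{S}^3$ sending $\Sigma$ to $\Sigma'$, per Proposition~\ref{max} and its consequence cited in the excerpt. The splicing in (ii) is the principal obstacle; everything else is bookkeeping on top of JSJ uniqueness, Gordon--Luecke, and Theorem~\ref{primeSat1}.
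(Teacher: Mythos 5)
Your overall architecture is the same as the paper's: primeness of the satellite levels (Theorem \ref{primeSat}) plus Gordon--Luecke to turn $K_n\simeq K_n'$ into homeomorphisms of the exteriors $M_n\cong M_n'$, then JSJ uniqueness to match the pieces level by level, then Proposition \ref{max} to convert a strong equivalence of maximal defining sequences into an equivalence of soleknots up to mirror image. Your step (i), pinning down a single mirror sign $\epsilon$ for all levels by restricting the exterior homeomorphism along the linear JSJ chain, is a point the paper passes over silently, and it is argued correctly.

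The genuine gap is in your step (ii), and it is twofold. First, you declare the boundary-matching (adjusting $g:(N_n,N_{n+1})\to(N_n',N_{n+1}')$ to agree with $f_n$ on $\partial N_n$) to be ``precisely where the Gonz\'alez--Acu\~na--Whitten analysis \cite{GAW} is brought to bear,'' but this is a misattribution: the results of \cite{GAW} quoted in the paper (Theorem \ref{Redu} and Theorem \ref{7.11}) are purely algebraic classifications of tight and loose knot subgroups of knot groups, and they say nothing about which mapping classes of the torus $\partial N_n$ are realized by self-homeomorphisms of the braid-complement piece $W_n$. In this paper \cite{GAW} is used only in Theorem \ref{main1}, to pass from an abstract isomorphism of the direct limits to isomorphisms of the filtration terms; it plays no role in Theorem \ref{homeomorphic type and filtration}. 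Second, having flagged the splicing as ``the principal obstacle,'' you do not actually carry it out: no argument is given that the discrepancy $f_n|_{\partial N_n}\circ (g|_{\partial N_n})^{-1}$ is realizable by a self-homeomorphism of $(N_n,N_{n+1})$. The honest resolution is elementary but must be said: both $f_n|_{\partial N_n}$ and $g|_{\partial N_n}$ arise from orientation-preserving homeomorphisms of knot exteriors in $\mathbb{S}^3$, hence carry meridian to meridian and preferred longitude to preferred longitude up to simultaneous sign, so the two restrictions differ by an element of the finite subgroup of the mapping class group of the torus generated by $-\mathrm{id}$, which is realized by a symmetry of the pair $(N_n,N_{n+1})$ (rotation by $\pi$ of the solid torus preserving the closed braid up to isotopy). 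The paper's own proof is also terse here --- it simply asserts that JSJ uniqueness makes the embeddings of $M_{n-1}$ in $M_n$ and $M_{n-1}'$ in $M_n'$ isotopic and concludes strong equivalence --- but it at least does not invoke an irrelevant tool. As written, your step (ii) is unsupported, and the cited machinery cannot support it.
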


\begin{proof}
    We adopt the notations from previous discussions. 
	The `if' part is a direct consequence of Proposition \ref{max}. 
 
    We next show the `only if' part: for any $n>0$, $K_n$ and $K_n'$ are knot groups of some prime knots by Theorem \ref{primeSat}. If $K_n  \simeq K_n$, by the classical theorem of C. Gordon and J. Luecke\cite{Gordon&Luecke}, $M_n$ is homeomorphic to $M_n'$. Moreover, up to mirror reflection, $M_n$ and $M_n'$ are knot complements of the same knot in $\mathbb{S}^3$. So up to mirror reflection, we can identify $M_n$ and $M_n'$. The uniqueness of JSJ-decomposition of $M_n$ and $M_n'$ implies that the embeddings of $M_{n-1}$ in $M_n$ and $M_{n-1}'$ in $M_n'$ are isotopic (we identified $M_n$ and $M_n'$). This implies that $\Sigma$ and $\Sigma'$ have strongly equivalent maximal defining sequences. Therefore $\Sigma$ and $\Sigma^{\prime}$ are equivalent up to mirror reflection. 
\end{proof}

The proof of the Theorem \ref{main1} relies on some tools from \cite{GAW}. In \cite{GAW}, F. Gonz\'alez-Acu\~na and W. Whitten studied knot subgroups of a knot group. By knot group we mean a group that is isomorphic to the fundamental group of the complement of a knot in $\mathbb{S}^3$. Not all theorems and definitions from \cite{GAW} will be shown here. We will only quickly go through the ones are used in the proof of Theorem \ref{main1} and explain the ideas as much as we can. The readers are encouraged to read \cite{GAW} for technical details. We will not try to explain everything in the original statement of theorems and definitions. 

 Let $M$ be a knot exterior, denote the union of the JSJ-pieces of $M$ that meet the boundaries of $M$ by $\gamma M$. A subgroup $H$ of $\pi_1(M)$ is \textbf{loose} if, for some component $C$ of $M- \gamma M$, there is a conjugate of $H$ contained in $i_*(\pi_1 (C))$, where $i: C \mapsto M$ is inclusion. Otherwise, $H$ is \textbf{tight}. 

   Let $G$ be a knot group, and let $H < G$. Then $H$ is a \textbf{companion} of $G$, if there is a knot complement $E$ containing an essential torus $T$ and if there is an isomorphism $\phi : G \mapsto \pi_1(E)$ sending $H$ onto $i_*(\pi_1 (E_1))$ where $E_1$ is the component of $cl(E-T)$ that is a knot complement and $i: E_1 \mapsto E$ is inclusion. 

\begin{remark}\cite{GAW}\label{rem}
	\textup{If $E$ is the complement of a prime knot $K$, then the complements $E_1, \dots, E_r$ of the companions of $K$(in the sense of satellite knots) are naturally embedded in $E$, and $\pi_1 (E_1), \dots, \pi_1 (E_r)$ are up to conjugacy all the loose companions of $\pi_1 (E)$. }
\end{remark}

\begin{theorem}\cite{GAW}\label{Redu}
	Any noncyclic knot-subgroup of a knot group $G$ is a tight subgroup of $G$ or of a loose companion of $G$. 
\end{theorem}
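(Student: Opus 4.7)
The plan is to induct on the number of pieces in the JSJ-decomposition of the exterior $M$ of the knot $K$ whose group is $G=\pi_1(M)$. In the base case $M$ consists of a single JSJ-piece, so $\gamma M = M$ and $M \setminus \gamma M = \emptyset$; with no interior pieces available, no subgroup of $G$ can be loose, and the conclusion that $H$ is tight in $G$ holds vacuously.

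For the inductive step, suppose the result holds for all knot exteriors with strictly fewer JSJ-pieces than $M$. If $H$ is tight in $G$ there is nothing to do, so assume $H$ is loose. After replacing $H$ by a conjugate we have $H \leq i_*(\pi_1(C))$ for some component $C$ of $M \setminus \gamma M$. I would pick a JSJ-piece $C' \subseteq C$ adjacent to $\gamma M$ across a JSJ-torus $T$, and let $M_T$ be the component of the closure of $M \setminus T$ not containing $\partial M$. Because $T$ is essential in the knot exterior $M \subset \mathbb{S}^3$, it bounds a solid torus in $\mathbb{S}^3$ on the side away from $\partial M$, and $M_T$ is the exterior of a companion knot of $K$. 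By Remark \ref{rem}, the inclusion-induced image of $\pi_1(M_T)$ in $G$ is a loose companion of $G$, and this image contains the chosen conjugate of $H$.

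The knot exterior $M_T$ has strictly fewer JSJ-pieces than $M$ (its decomposition is obtained from that of $M$ by discarding $\gamma M$ and the torus $T$), and $H$ remains a noncyclic knot subgroup of $\pi_1(M_T)$. Applying the inductive hypothesis to $\pi_1(M_T)$ yields two outcomes: either $H$ is tight in $\pi_1(M_T)$, which is already a loose companion of $G$, or $H$ is tight in some loose companion $L$ of $\pi_1(M_T)$. In the second case, $L$ arises via Remark \ref{rem} from a deeper companion knot exterior $M'_T \subset M_T \subset M$, and Remark \ref{rem} applied inside $M$ exhibits $L$ as a loose companion of $G$ as well. Either way, $H$ is tight in a loose companion of $G$, completing the induction.

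The main obstacle I foresee is the honest identification at each descent of $M_T$ with a knot exterior, which relies on the fact that in $\mathbb{S}^3$ an essential torus bounds a solid torus on exactly one side, so that the side disjoint from $\partial M$ is the exterior of the core of that solid torus. A secondary technical point is tracking conjugations through the composition of inclusions so that the loose-companion relation behaves transitively in the sense used above; this is routine via Remark \ref{rem} but must be explicit since the definitions of tight and loose are stated only up to conjugacy in a fixed ambient group.
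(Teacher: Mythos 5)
First, a point of comparison: the paper does not prove this statement at all --- it is imported verbatim from \cite{GAW} as a black box, so there is no in-paper argument to measure yours against. Your reconstruction of the descent through the JSJ hierarchy is the right skeleton (and, as far as one can reconstruct, close in spirit to how such results are proved in \cite{GAW}): $\gamma M$ is a single piece since $\partial M$ is connected, the dual graph of the JSJ decomposition of a knot exterior is a tree because every torus in $\mathbb{S}^3$ separates, so each component $C$ of $M\setminus\gamma M$ meets $\gamma M$ along exactly one JSJ torus $T$ and is the interior of a companion exterior $M_T$; your identification of $M_T$ with the exterior of the core of the solid torus bounded by $T$ on the side containing $K$ is the correct justification.

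The genuine gap is in your inductive step, at the sentence ``$L$ arises via Remark \ref{rem} from a deeper companion knot exterior.'' Remark \ref{rem} is stated only for \emph{prime} knots, and the companion knot whose exterior is $M_T$ may well be composite (nothing in the construction prevents a satellite with composite companion). Worse, a ``companion of $\pi_1(M_T)$'' is defined via an abstract isomorphism $\phi:\pi_1(M_T)\to\pi_1(E)$ onto the group of some \emph{other} knot complement $E$, so the transitivity you need --- that a loose companion of a loose companion of $G$ is a loose companion of $G$ --- does not follow from the definitions without an argument extending $\phi$ to all of $G$, which you do not supply. The repair is to abandon the appeal to the inductive hypothesis on $M_T$ and instead iterate the descent explicitly: if the current conjugate of $H$ is loose in $\pi_1(M_k)$, push it into the next geometrically embedded companion exterior $M_{k+1}\subset M_k\subset M$. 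Each $\partial M_{k+1}$ is an essential torus in $M$ itself, so $i_*(\pi_1(M_{k+1}))$ is a loose companion of $G$ directly, with $E=M$ and $\phi=\mathrm{id}$ in the definition --- no transitivity and no primality needed. The process terminates because the number of JSJ pieces strictly decreases, and at termination $H$ is tight where it sits. Note also that your argument never uses the hypothesis that $H$ is a noncyclic knot subgroup; here that is harmless (the dichotomy holds for any subgroup), but it should make you pause and confirm, as I did above, that no step silently depended on it.
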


Theorem \ref{Redu} reduces the problem of finding the knot subgroups of a knot group to that of finding the tight subgroups of the knot group and its loose companions. We have a good understanding of loose companions especially in the case of prime knots by remark \ref{rem}. The following theorem classifies all the tight subgroups of a prime satellite knot. 

	\begin{theorem}\cite{GAW}\label{7.11}
		Let $G$ be the group of a prime satellite knot $K$, and let $G_1$ be the group of a nontrivial knot $K_1$(If $K_1$ is a cable knot, assume that $G_1 \ncong G $ ). Then $G_1$ properly embeds in $G$ as a tight subgroup if and only if there are integers $s, t, p, d, \epsilon,$ and $\delta$ such that 
		\begin{itemize}
			\item[\textbf{1.}] $K$ is the $(s,t)-$cable of the $(p,q)-$torus knot;
			\item[\textbf{2.}] $pq-\frac{s}{t}=-\frac{\epsilon}{d}+\frac{\delta z w}{d t}$, $d>1$, $|\epsilon|=1$, and $|\delta|\leq 1$, where $z=(t,d)>1$ and $w=(s, dpq+\epsilon)$; and
			\item[\textbf{3.}] if $|\delta|=1$, then $K_1$ is the $(sw^{-1}, \epsilon \delta z)-$cable of the $(p,q)-$torus knot, and if $\delta =0 $, then $K_1$ is a composite knot every prime factor of which is a $(p,q)-$torus knot or its mirror image. 
		\end{itemize}
	\end{theorem}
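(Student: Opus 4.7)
The plan is to convert the algebraic statement about subgroup inclusions into a geometric one about embedded 3-manifolds, then to read off the conditions from the JSJ-decomposition of the knot complement. Let $M = \mathbb{S}^3 \setminus K$ and $M_1 = \mathbb{S}^3 \setminus K_1$. Since knot complements are $K(\pi,1)$'s (as used in Theorem \ref{K(G,1)}), an injection $G_1 \hookrightarrow G$ up to conjugacy is realized by a $\pi_1$-injective continuous map $f : M_1 \to M$. Using Jaco--Shalen--Johannson theory and the fact that $M_1$ is Haken, I would homotope $f$ so that it is transverse to the JSJ tori of $M$ and is either an embedding or factors through an essential embedded submanifold whose fundamental group agrees with $f_*(\pi_1 M_1)$. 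This way, a tight subgroup of $G$ is realized by a properly embedded knot exterior $N \subset M$ whose compressed image is not contained in any single component of $M \setminus \gamma M$.

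Next I would exploit the tightness hypothesis. Let $\gamma M$ be the JSJ-piece meeting $\partial M$. Because $K$ is a prime satellite knot, $\gamma M$ is either hyperbolic or a Seifert-fibered cable space glued to one or more companion complements across essential tori. Tightness of $G_1$ forces the embedded exterior $N$ to meet $\gamma M$ essentially, rather than sitting inside one of the interior companion pieces. I would first rule out the hyperbolic case: if $\gamma M$ were hyperbolic, the essential torus $\partial N$ crossing into $\gamma M$ would have to be compressible or boundary-parallel there, forcing $G_1$ to be loose, a contradiction. Hence $\gamma M$ is a cable space, which geometrically means $K$ is a cable knot, say the $(s,t)$-cable of some nontrivial companion $C$. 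A second round of the same argument, applied to the companion torus and using that $N$ reaches out through $\gamma M$, pins $C$ down as a torus knot $T(p,q)$, establishing condition \textbf{1}.

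Then I would extract the arithmetic conditions. A cable space carries a unique Seifert fibration with a single exceptional fiber of multiplicity $t$, and its two boundary tori inherit canonical slope coordinates in terms of $(p,q,s,t)$. The embedded torus $\partial N$ lies inside this cable space (together with a collar into the $T(p,q)$-complement), so the meridian slope of $K_1$ on $\partial N$ must be a primitive element of $H_1(\partial N)$ that, after pushing across the Seifert fibration and the torus-knot exterior, kills $\pi_1$ of a disk that $K_1$ bounds in $\mathbb{S}^3$. Computing this constraint in Seifert invariants should yield the Diophantine identity
\[pq - \frac{s}{t} = -\frac{\epsilon}{d} + \frac{\delta z w}{d t},\]
with $z = (t,d) > 1$ and $w = (s, dpq + \epsilon)$, where $d$ encodes the index of $\partial N$ along the exceptional fiber and $\epsilon, \delta \in \{-1,0,1\}$ encode the orientation and multiplicity of how $\partial N$ wraps around the companion. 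The case $|\delta| = 1$ corresponds to $\partial N$ following a single regular fiber of the cable space away from the exceptional fiber, which makes $K_1$ a further cable of the same torus knot with parameters $(sw^{-1}, \epsilon \delta z)$; the case $\delta = 0$ corresponds to $\partial N$ bounding inside the cable space after Dehn filling, which forces $K_1$ to decompose as a connected sum whose prime factors are all $(p,q)$-torus knots or their mirrors. This bookkeeping yields conditions \textbf{2} and \textbf{3}.

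The main obstacle I expect is the last arithmetic step: translating the geometry of how $\partial N$ sits inside a cable space, relative to both the torus-knot complement below and $\partial M$ above, into a single Diophantine relation with a clean parametrization by $(d,\epsilon,\delta)$. Carefully setting up matching conventions for meridian-longitude frames on $\partial M$, $\partial N$, and the companion torus, and verifying that the primality hypothesis on $K$ together with the assumption $G_1 \not\cong G$ in the cable case forces the tight embedding to land in precisely the listed families (and not in degenerate configurations), is where the technical heart of the argument lies. The converse directions in \textbf{1}--\textbf{3} are then handled by explicit construction: given the numerical data, one builds $N$ inside $M$ by performing the prescribed cabling or connected-sum operation inside the cable space, and checks by inspection that the resulting inclusion is $\pi_1$-injective and tight.
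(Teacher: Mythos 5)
This statement is not proved in the paper at all: it is quoted verbatim from Gonz\'alez-Acu\~na and Whitten \cite{GAW} and used as a black box, so there is no in-paper argument to compare yours against. Judged on its own merits, your outline contains a genuine gap at its very first reduction. You claim that a tight subgroup $G_1 < G$ is ``realized by a properly embedded knot exterior $N \subset M$.'' But if $N \subset M$ were an embedded knot exterior with essential boundary, then $\partial N$ would be isotopic to a JSJ torus of $M$, and the side of that torus which is a knot exterior is exactly a companion exterior --- i.e.\ $\pi_1(N)$ would be a \emph{loose} companion of $G$ (or all of $G$). So tight proper knot subgroups are precisely the ones that are \emph{not} carried by embedded submanifolds; your reduction assumes away the entire content of the theorem. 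The correct argument (as in \cite{GAW}) must analyze singular, non-degenerate maps $M_1 \to M$ using the Jaco--Shalen/Johannson characteristic submanifold and enclosing theorems: an essential map of the Seifert pair $(M_1,\partial M_1)$ can be homotoped into the characteristic submanifold, tightness forces it to land essentially in the piece $\gamma M$ meeting $\partial M$, and the hyperbolic case is excluded because a $\mathbb{Z}\times\mathbb{Z}$-containing, freely indecomposable subgroup of a hyperbolic piece would be peripheral and hence loose. The Diophantine conditions in items \textbf{2} and \textbf{3} then arise from classifying essential maps (roughly, coverings compatible with the Seifert fibration) of a knot exterior into a cable space over a torus-knot exterior --- data that has no counterpart in the embedded picture.

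The second, smaller issue is that even granting your setup, the arithmetic is not derived: you assert the identity $pq - s/t = -\epsilon/d + \delta z w/(dt)$ ``should'' fall out of Seifert invariants without specifying what $d$, $\epsilon$, $\delta$, $z$, $w$ count, and you acknowledge this is the technical heart. As it stands the proposal is a plausible table of contents for the Gonz\'alez-Acu\~na--Whitten proof rather than a proof, and its foundational step (embeddedness of the realizing submanifold) is not merely unjustified but contradicts the definition of tightness used in the statement.
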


The technical details in this theorem is not important, the key point here is that $G_1$ properly embeds in $G$ as a tight subgroup if and only if $K$ is a cable knot of a torus knot. Theorem \ref{primeSat} proves that this is all we need when the pattern is a closed braid which is how we construct solenoids. 

\begin{theorem}\label{main1}
	Let $\Sigma$ and $\Sigma^{\prime}$ be two knotted soleknots, $(K_n, \varphi_n)$ and $(K_n', \psi_n)$ be the filtrations of $\pi_1(\mathbb{S}^3-\Sigma)$ and $\pi_1(\mathbb{S}^3-\Sigma^{\prime})$ respectively. Then $\pi_1(\mathbb{S}^3-\Sigma) \simeq \pi_1(\mathbb{S}^3-\Sigma^{\prime})$ if and only if $K_n \simeq K_n' $ for each $n\geq 0$. 
\end{theorem}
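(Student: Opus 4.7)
The \emph{if} direction follows immediately from Theorem \ref{homeomorphic type and filtration}: $K_n \simeq K_n'$ for all $n$ implies $\Sigma$ and $\Sigma'$ are equivalent up to mirror reflection, whence their complements are homeomorphic and their fundamental groups are isomorphic.

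For the \emph{only if} direction, fix an isomorphism $\Phi : G \to G'$, where $G = \pi_1(\mathbb{S}^3 - \Sigma)$ and $G' = \pi_1(\mathbb{S}^3 - \Sigma')$. My plan is to show that $\Phi$ matches the two filtrations level by level, so $K_n \simeq K_n'$ for each $n$. Since each $K_n$ is finitely generated (being a knot group) and $G' = \varinjlim K_m'$ with injective bonding maps $\psi_m$, I first define $m(n)$ to be the smallest integer such that some conjugate of $\Phi(K_n)$ in $G'$ lies in $K_{m(n)}'$. I then aim to prove that this inclusion is, after conjugation, an equality and that $n \mapsto m(n)$ is the identity on $\mathbb{N}$.

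The key tool will be the knot-subgroup analysis of \cite{GAW}. Viewed as a knot subgroup of $K_{m(n)}'$, $\Phi(K_n)$ is, by Theorem \ref{Redu}, either tight or a tight subgroup of a loose companion. By Theorem \ref{primeSat} and Remark \ref{rem}, together with the maximality of the defining sequence (Proposition \ref{max}), the loose companions of $K_{m(n)}'$ are, up to conjugacy, exactly the groups $K_j'$ for $j < m(n)$. Minimality of $m(n)$ therefore forbids $\Phi(K_n)$ from being loose, so it must be tight. Theorem \ref{7.11} then leaves two cases: either $\Phi(K_n)$ is conjugate to all of $K_{m(n)}'$, yielding $K_n \simeq K_{m(n)}'$; or $K_{m(n)}'$ is the group of a cable of a torus knot and $\Phi(K_n)$ is a proper tight subgroup of the very restrictive form spelled out in Theorem \ref{7.11}. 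In the first case, the symmetric analysis for $\Phi^{-1}$ produces a monotone inverse to $m$ on the strict chain $K_0' \subsetneq K_1' \subsetneq \cdots$, forcing $m$ to be the identity and giving $K_n \simeq K_n'$.

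The hardest step will be ruling out the proper tight subgroup case. In the $\delta = 0$ branch of Theorem \ref{7.11}, $\Phi(K_n)$ would be a composite knot group, contradicting the primality of $K_n$ for $n \geq 1$ guaranteed by Theorem \ref{primeSat}; the possible $n = 0$ exception can be absorbed by a finite index shift. In the $|\delta| = 1$ branch $\Phi(K_n)$ would itself be a cable of the same $(p,q)$-torus knot as $K_{m(n)}'$. To handle this case I plan to iterate the same dichotomy on $K_{n+1}$ and combine the explicit integer constraints in Theorem \ref{7.11} with the uniqueness of the maximal defining sequence to show that $\Phi(K_n)$ must already be conjugate into $K_{m(n)-1}'$, contradicting the minimality of $m(n)$ and closing the argument.
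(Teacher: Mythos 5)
Your overall strategy -- reduce to the knot-subgroup analysis of \cite{GAW}, use the tight/loose dichotomy of Theorem \ref{Redu}, identify the loose companions of $K_{m(n)}'$ with the $K_j'$, $j<m(n)$, and then invoke Theorem \ref{7.11} -- is the same as the paper's. The \emph{if} direction and the elimination of the loose case by minimality of $m(n)$ also match. But your proposal has a genuine gap exactly where you flag ``the hardest step'': the $|\delta|=1$ branch of Theorem \ref{7.11}, i.e.\ the possibility that $\Phi(K_n)$ is a \emph{proper} tight subgroup of $K_{m(n)}'$ because $K_{m(n)}'$ is the group of a cable of a torus knot. What you offer there is a plan (``iterate the same dichotomy on $K_{n+1}$ and combine the explicit integer constraints\dots''), not an argument, and it is not clear it closes: nothing you have set up shows that a proper tight cable subgroup would have to be conjugate into $K_{m(n)-1}'$, so the intended contradiction with minimality of $m(n)$ is not established. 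The $\delta=0$ branch is also not disposed of as easily as you suggest, since the ``finite index shift'' absorbing $n=0$ is left unexplained.

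The paper sidesteps this case entirely rather than confronting it. Since the $K_i$ ($i\geq 1$) are pairwise non-isomorphic prime knot groups (Theorem \ref{primeSat} plus Gordon--Luecke) and a knot group has only finitely many knot subgroups up to isomorphism by \cite{GAW}, one can choose $i$ large enough that $n_i>1$. For $n_i>1$ the exterior $M_{n_i}'$ is a satellite of a satellite, so by uniqueness of the JSJ-decomposition it is \emph{not} the exterior of a cable of a torus knot, and Theorem \ref{7.11} then rules out all proper tight embeddings at once: $\Phi(K_i)$ must equal a conjugate of $K_{n_i}'$. A single such deep-level identification, combined with Gordon--Luecke and JSJ uniqueness, forces $i=n_i$ and propagates $K_n\simeq K_n'$ down to all $n\leq i$; letting $i$ grow finishes the proof. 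If you want to salvage your level-by-level scheme, the cheapest fix is to adopt this observation: you do not need to analyze the cable case at all, only to arrange that the target level is at least $2$.
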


\begin{proof}
    We have proved in Theorem \ref{homeomorphic type and filtration} that $K_n \simeq K_n'$ for every $n$ implies that the soleknots $\Sigma$ and $\Sigma'$ are equivalent. Therefore the soleknot complements $\mathbb{S}^3-\Sigma$ and $\mathbb{S}^3-\Sigma'$ are homeomorphic. Hence, $\pi_1(\mathbb{S}^3-\Sigma) \simeq \pi_1(\mathbb{S}^3-\Sigma^{\prime})$. This proves one direction.
    
    Now let $F: \pi_1(\mathbb{S}^3-\Sigma) \mapsto \pi_1(\mathbb{S}^3-\Sigma^{\prime})$ be an isomorphism. Since all the $\psi_n$'s are injective, the canonical map $K_n' \mapsto \pi_1(\mathbb{S}^3-\Sigma^{\prime})$ is injective and $K_n'$ can be regarded as subgroups of $\pi_1(\mathbb{S}^3-\Sigma^{\prime})$. By the classical theorem of C. Gordon and J. Luecke, up to mirror image, fundamental groups distinguish the prime knots. So $\{K_n\}_{n=1}^{\infty}$ are pairwise non isomorphic since by Theorem \ref{primeSat}, these are all prime knots. For each $i\geq 1$, $K_i$ is finitely generated, $F(K_i)$ is a finitely generated subgroup of $\pi_1(\mathbb{S}^3-\Sigma^{\prime})$. So there exists a smallest natural number $n_i$ such that $F(K_i) < g^{-1}K_{n_i}'g$ for some $g\in \pi_1(\mathbb{S}^3-\Sigma^{\prime})$. By the results from \cite{GAW}, we know that any knot groups can only have finitely many knot subgroups up to isomorphisms. So we can choose an $i>0$ such that $n_i$ is greater than $1$.  
	
	 By Theorem \ref{Redu}, $F(K_i)$ is a tight subgroup of $g^{-1}K_{n_i}'g$ or of a loose companion of $g^{-1}K_{n_i}'g$. Every loose companion of $g^{-1}K_{n_i}'g$ will give rise to a knot complement of a companion knot of the knot $L$ corresponding to $M_{n_i}'$(This means $M_{n_i}'$ is the knot complement of $L$). Its boundary must be isotopic to one of the tori in the JSJ-decomposition of $M_{n_i}'$. The JSJ-decomposition of $M_{n_i}'$ is a subset of $\{M_k'\}_{k=0}^{n_i-1}$ and the JSJ-decomposition of $M_0'$. Therefore $F(K_i)$ is not a tight subgroup of a loose companion of $g^{-1}K_{n_i}'g$ since $F(K_i)$ is not contained in any conjugates of $g^{-1}K_{n_i-1}'g$ by the choice of $n_i$. So $F(K_i)$ is a tight subgroup of $g^{-1}K_{n_i}'g$. 
	 
	  By uniqueness of JSJ-decomposition, $M_{n_i}'$ is the knot complement of a cable knot of a torus knot if and only if $n_i=1$. So it is not the complement of the cable of a torus knot since $n_i>1$. By Theorem \ref{7.11}, the group of a nontrivial knot properly embeds in the group of a prime satellite knot $K$ as tight subgroup if and only if $K$ is a cable of the some torus knot. This implies that $F(K_i)$ does not properly embed in $g^{-1}K_{n_i}'g$, therefore $F(K_i)=g^{-1}K_{n_i}'g$. $F(K_i)=g^{-1}K_{n_i}'g$ implies that $K_i$ and $K_{n_i}'$ correspond to the same prime knot up to mirror image. So $M_i$ is homeomorphic to $M_{n_i}'$. By the uniqueness of JSJ-decomposition of $M_i$ and $M_{n_i}'$, $i=n_i$ and $K_n \simeq K_n'$ for every $n\leq i$. The same argument works for all $n\geq i$, hence $K_n\simeq K_n'$ for all $n\geq 0$.
\end{proof}

Combine Theorem \ref{homeomorphic type and filtration} and Theorem \ref{main1}, we have the following. 
\begin{theorem}\label{complement}
	Let $\Sigma$ and $\Sigma'$ be two knotted soleknots in $\mathbb{S}^3$, $\pi_1(\mathbb{S}^3-\Sigma) \simeq \pi_1(\mathbb{S}^3-\Sigma')$ if and only if there is a homeomorphism $f$ of $S^3$ such that $f(\Sigma)=\Sigma' $, in other words, they are equivalent or they are mirror image of each other.
\end{theorem}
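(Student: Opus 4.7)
The plan is to prove this by combining Theorem \ref{main1} and Theorem \ref{homeomorphic type and filtration} essentially as a corollary, since the two previous theorems already translate between (a) fundamental group isomorphism, (b) filtration isomorphism, and (c) equivalence up to mirror reflection. The overall scheme is a simple transitive chain: fundamental groups $\Rightarrow$ filtrations $\Rightarrow$ equivalence up to mirror image, and the reverse direction is nearly tautological.

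For the ``if'' direction, I would assume that $\Sigma$ and $\Sigma'$ are equivalent, or that their mirror images are equivalent. In either case there is a homeomorphism $f: \mathbb{S}^3 \mapsto \mathbb{S}^3$ (orientation preserving if $\Sigma \sim \Sigma'$, orientation reversing otherwise) with $f(\Sigma) = \Sigma'$. Then $f$ restricts to a homeomorphism of the complements $\mathbb{S}^3 - \Sigma \mapsto \mathbb{S}^3 - \Sigma'$, which induces the desired isomorphism of fundamental groups (note: Theorem \ref{K(G,1)} makes this particularly transparent because the complements are $K(G,1)$'s).

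For the ``only if'' direction, which is the substantive content, I would start from an isomorphism $F: \pi_1(\mathbb{S}^3-\Sigma) \mapsto \pi_1(\mathbb{S}^3-\Sigma')$. Applying Theorem \ref{main1} directly yields $K_n \simeq K_n'$ for every $n \geq 0$, where $(K_n, \varphi_n)$ and $(K_n', \psi_n)$ are the filtrations arising from the unique maximal defining sequences (Proposition \ref{max}). Feeding this termwise knot-group isomorphism into Theorem \ref{homeomorphic type and filtration} then gives that $\Sigma$ and $\Sigma'$ are equivalent or that their mirror images are equivalent, which is exactly the conclusion.

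The main obstacle has effectively been absorbed into the two preceding theorems: Theorem \ref{main1} required the delicate argument using tight versus loose subgroups from Gonz\'alez-Acu\~na--Whitten together with Theorem \ref{primeSat} to identify the filtration subgroups inside $\pi_1(\mathbb{S}^3-\Sigma')$, and Theorem \ref{homeomorphic type and filtration} required the Gordon--Luecke theorem plus the uniqueness of the JSJ-decomposition to promote the termwise knot isomorphisms to a strong equivalence of maximal defining sequences. With those in hand, the present theorem is just the composition, and no further technical work is expected.
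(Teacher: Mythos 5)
Your proposal matches the paper exactly: the paper derives this theorem by combining Theorem \ref{main1} and Theorem \ref{homeomorphic type and filtration} in precisely the transitive chain you describe, with the ``if'' direction following from the induced homeomorphism of complements. No gaps; this is the intended argument.
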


Theorem \ref{complement} improves Theorem 5.4 in \cite{CMR}. They show that there exists uncountably many inequivalent knotted soleknots complements using hyperbolic structures. Notice that by \cite{CMR} and \cite{TAME}, the fundamental group of the complements of unknotted soleknots only determine the solenoids as a topological space, not the tame embeddings or soleknots as we call them here. In fact, there are uncountably many different unknotted soleknots with isomorphic fundamental group for their complements. 
\section*{Acknowledgments}
The author would like to thank Gregory Conner for introducing this problem to him and conversations on the topic, Mark Hughes and Jessica Purcell for interests and comments on the paper.

\renewcommand\refname{Reference}
\bibliographystyle{plain}
\bibliography{template_Article}

\end{document}